\newcommand{\ignore}[1]{}{}
\newcommand{\gap}{\vspace{0.1in}}
\newcommand{\epc}{\hspace{1pc}}
\newcommand{\thalf}{{\textstyle{\frac{1}{2}}}}
\newcommand{\onebld}{{\bf 1}}
\newcommand{\wh}{\widehat}
\let\svthefootnote\thefootnote
\newcommand\freefootnote[1]{%
  \let\thefootnote\relax%
  \footnotetext{#1}%
  \let\thefootnote\svthefootnote%
}
\newcommand\blfootnote[1]{%
  \begingroup
  \renewcommand{\@makefntext}[1]{\noindent\makebox[1.8em][r]#1}
  \renewcommand\thefootnote{}\footnote{#1}%
  \addtocounter{footnote}{-1}%
  \endgroup
}
\DeclareMathOperator*{\proj}{proj}
\DeclareMathOperator{\conv}{conv}
\DeclareMathOperator*{\argmin}{arg\,min}
\newcommand{\R}{\mathbb{R}}
\title{
Improving the Solution of Indefinite Quadratic Programs 
and Linear Programs with Complementarity Constraints 
by a Progressive MIP Method}
\titlerunning{Improving the Solution of Indefinite QP and LPCC by a 
Progressive MIP Method}
\author{Xinyao Zhang \and Shaoning Han \and Jong-Shi Pang\thanks{This work is based on researched supported by the Air Force Office of Sponsored Research under grant FA9550-22-1-0045.}}
\institute{
    Xinyao Zhang and Jong-Shi Pang \at
    The Daniel J.\ Epstein Department of Industrial and Systems Engineering, \\
    University of Southern California, \\
    Los Angeles, CA 90066, USA \\
    \email{xinayoz@usc.edu; jongship@usc.edu}
    \and
    Shaoning Han \at
    Department of Mathematics, \\
    Institute of Operations Research and Analytics,\\
    National University of Singapore, \\
    Singapore 119076 \\
    \email{shaoninghan@nus.edu.sg}
}
\date{Original August 2024; revised February 2025}
\begin{document}

\maketitle  

\begin{abstract}
\noindent Indefinite quadratic programs (QPs) 
are known to be very difficult to be solved 
to global optimality, so are linear programs 
with linear complementarity constraints 
(LPCCs).  It is a classic 
result that for a QP with an optimal solution, 
the QP has an equivalent formulation as a 
certain LPCC in terms of their globally
optimal solutions.   Thus it is natural to
attempt to solve an (indefinite) QP as a 
LPCC.  This paper presents a 
progressive mixed integer linear programming 
method for solving a general LPCC. Instead 
of solving the LPCC with a full 
set of integer variables expressing
the complementarity conditions, the presented
method solves 
a finite number of mixed integer subprograms 
by starting with a small fraction of integer 
variables and 
progressively increasing this fraction.  After 
describing 
the PIP (for progressive integer programming) 
method and providing some details 
for its implementation and tuning 
possibilities, we demonstrate, via an
extensive set of computational experiments, 
the superior performance of the progressive 
approach over the direct solution of the 
full-integer formulation of the LPCCs in obtaining high-quality solutions.  It is 
also shown that the solution obtained at the 
termination of the 
PIP method is a local minimizer of the LPCC, 
a property that 
cannot be claimed by any known non-enumerative
method for solving this nonconvex program.   
In all the experiments, the PIP method is 
initiated at a feasible solution 
of the LPCC obtained from a nonlinear 
programming solver, 
and with high likelihood, can successfully 
improve it.  Thus,
the PIP method can improve a stationary 
solution of an 
indefinite QP, something that is not likely to 
be achievable by 
a nonlinear programming method.  Finally, some 
analysis is
presented that provides a better understanding 
of the roles of the LPCC suboptimal solutions 
in the local optimality
of the indefinite QP.  This 
local aspect of the connection between a QP 
and its LPCC formulation has seemingly
not been addressed in the literature.
\end{abstract}

\keywords{Nonconvex quadratic programming 
\and Linear programs with complementarity 
constraints \and Improving stationary 
solutions \and Progressive integer programming}

\subclass{90C20 \and 90C26 \and 90C33}

\section{Introduction} \label{sec:introduction}

It is well known that indefinite quadratic 
programs (QPs) are NP-hard \cite{Vavasis90}; 
thus are very difficult to be solved
to  global optimality in practice.   This is true 
even when the Hessian of the quadratic objective 
has only a single negative 
eigenvalue \cite{PardalosVavasis91}.  The special 
case of the ``standard QP'', i.e., the case where 
the feasible region
is the unit simplex in an Euclidean space, has 
received considerable attention, partly due to its
connection to 
the ``copositivity test'' problem on the 
nonnegative orthant; see for instance 
\cite{Bomze98,ChenBurer12,Nowak99,
XiaVeraZuluaga20}.  
Section~5.2 of \cite{CuiChangHongPang20}
shows that a special homogeneous QP with one negative eigenvalue 
can be globally resolved by solving two convex
quadratic programs and the procedure can be 
extended to a QP with two negative eigenvalues 
by the use of parametric
convex quadratic programming 
\cite{CottlePangStone92} via its 
linear complementarity formulation.  Among the most
challenging QPs is the quadratic assignment 
problem; see 
\cite{BurkardKarischRendl97,AnstreicherBrixus01a,
AnstreicherBrixus01b,
AnstreicherBrixusGouxLinderoth02} 
for some references and the QAPLIB reference page 
\url{https://coral.ise.lehigh.edu/data-sets/qaplib/} 
for an extensive literature.

\gap

It is long known \cite{GiannessiTomasin73} that a solvable 
quadratic program is equivalent, in terms of globally 
optimal solutions, to a linear program with linear 
complementarity constraints (LPCC).  This classic result 
was extended in \cite{HuMitchellPang12} to a feasible QP 
that is not known a priori to have an optimal solution.  
Besides indefinite QPs, the LPCC has broad applications 
and been studied extensively; see
\cite{HuMitchellPangBennetKunapuli08,HuMitchellPangYu12, 
JMMPangWachter20}; 
see also \cite[Section~4.1]{Pang10}.  Among the special 
instances of the LPCC are those derived from 
bilevel linear programs for which there is a vast 
literature such as \cite{AudelSavardZghal07,JSRFaustino06}.
In general, using a big-M quantity to
express the complementarity condition by binary variables, 
it is well known that an LPCC admits a mixed integer 
linear program (MILP) formulation, thus enabling the global 
solution of an LPCC, and in particular, an indefinite QP, 
by MILP methods.   Nevertheless, in spite of the research 
in the cited references and the more recent work
\cite{YuMitchellPang19}, the global solution of LPCCs, 
particularly those of large dimensions, by MILP methods 
remains a daunting task.   In contrast to this aim of 
computing globally
optimal solutions, stationary solutions of an indefinite 
quadratic program can be computed by nonlinear 
programming (NLP) methods; so can those of an LPCC 
formulated as a nonlinear program.    Indeed,
there is a vast literature on nonlinear programming based 
local search methods for computing various kinds 
of stationary solutions of an LPCC; some of these methods 
have been implemented in softwares such as
{\sc knitro} \cite{ByrdHribarNocedal99} and {\sc Filter} 
\cite{FletcherLeyffer04}. 
The Ph.D.\ dissertation \cite{Jara-Moroni18}  and the 
subsequent papers 
\cite{FangLeyfferMunson20,JMMPangWachter20} are several
recent additions to this topic, 
providing a comprehensive state-of-the-art summary of the 
solution of the LPCC together with many references.

\gap

Prompted by a comment of 
a referee, we find it useful to emphasize that
we have bypassed the use of the term 
``B-stationarity'' for the LPCC throughout the 
paper (see \cite{PangFukushima99} for a brief 
summary of the origin of the 
term).  The latter term is generally used 
in the context of problems with 
nonlinear functions.  For the LPCC, it is 
known since
\cite{LuoPangRalph96} that every B-stationary 
solution must be a local minimizer, which of 
course is a sharper kind of solution.
Thus, as in \cite{JMoroniPangWaechter18}, we 
use the term ``local minimizer'' instead of 
B-stationary solution for the LPCC.

\gap

There are 
essentially two ways to attempt to improve the 
objective value of a computed solution of an LPCC
(typically of the stationary kind that is not
necessarily locally minimizing): one is to restart the 
same local search method at a different initial point, 
hoping that the final solution will be
better, or initiate another search method (perhaps with 
some randomization) at the computed solution; the other 
is to resort to a global method via the MILP formulation 
of the equivalent LPCC formulation.  A main objective of 
the present work is to advance the solution methodology of 
an LPCC, and thus an indefinite QP, by a novel 
employment of the MILP methodology, 
which we term the {\sl progressive integer 
programming} (PIP) method.   A major distinction of 
the PIP method is that while it is IP based, 
it completely bypasses the solution of the 
full-integer formulation
of the LPCC.  Instead, it solves a finite 
number of mixed integer subprograms by 
starting with a small fraction 
of integer variables and progressively 
increasing this fraction; the solution of 
these smaller sized IPs is totally
left to an integer programming solver, such 
as the most well-known GUROBI, without any 
attempt to refine
the IP methods.   Through an extensive set of 
computational experiments, we demonstrate the 
major practical benefits of the PIP method, 
which we summarize below:

\gap

\noindent (i) starting from a computed solution of the 
LPCC by a NLP solver, the PIP method consistently
improves the solution considerably without solving the full-integer LPCC formulation;

\gap

\noindent (ii) for problems for which the full-integer 
formulation of the LPCC can be solved to global 
optimality, the PIP solution matches the global solution 
either exactly, or inexactly with a small gap, and
most importantly, requiring only a fraction of 
computational time; 

\gap

\noindent (iii) the PIP method can flexibly control the 
number of integer variables in the subproblems
being solved, thus providing a practical balance 
between solution quality and computational cost,

\gap

\noindent (iv) PIP scales well to large-sized 
problems, obtaining solutions (short of
a certificate of global optimality) of superior 
quality that otherwise cannot be obtained by NLP 
methods as implemented by state-of-the-art solvers;

\gap

\noindent (v) bridging NLP and MILP, the PIP 
method can take advantage of any advances from 
either field to improve its efficiency.

\gap

\noindent 
Theoretically, when the subproblems are solved to
global optimality, the solution obtained 
by PIP is always a local minimizer of the 
LPCC, a property that cannot be claimed 
when the solution of the full-integer LPCC is 
early terminated 
by an integer programming method.  
Practically, the computational success of PIP can be 
attributed to two strategies: it warm starts the solution 
of each mixed integer subproblem from that of the
previous subproblem; it early terminates the 
solution of the subproblems without sacrificing the 
objective quality of the solution at termination.
In the case of an indefinite QP, in spite of its 
equivalence 
to the LPCC in terms of global optimality, a 
local minimizer of the latter 
problem is not necessarily a locally optimal 
solution to the given QP.  A theoretical contribution
of this paper is that we show that the solution obtained
by solving a reduced-integer formulation 
of the LPCC is globally optimal to the QP over a certain 
polyhedral subset of the feasible region.  Moreover, it 
is possible to provide a constructive sufficient condition 
for the PIP solution to be a local minimizer of the given 
QP, by solving a MILP with a reduced number of 
integer variables.

\section{The LPCC and its Connection to QPs} \label{sec:LPCC}

In its general form, a linear program with linear 
complementarity constraints (LPCC) is to find vectors
$(x,y,z) \in \mathbb{R}^{n+2m}$ to
\begin{equation} \label{eq:LPCC}
\begin{array}{ll}
\displaystyle{
\operatornamewithlimits{\mbox{\bf minimize}}_{(x,y,z) 
\in \mathbb{R}^{n + 2m}}
} & \theta \, \triangleq \, 
c^{\top}x + e^{\top}y + f^{\top}z \\ [0.1in]
\mbox{\bf subject to} & Ax + By + Cz \, = \, b \\ [0.1in]
\mbox{\bf and} & 0 \, \leq \, y \, \perp \, z \, \geq \, 0,
\end{array} \end{equation}
where $c$ is an $n$-vector, $e$ and $f$ are $m$-vectors, $y$ 
and $z$ are $m$-vectors,
$A$ is a  $k \times n$ matrix, $B$ and $C$ are $k \times m$ 
matrices, $b \in \mathbb{R}^k$,
and $\perp$ is the perpendicularity notation which in this 
context denotes the complementarity condition between the 
two vectors $y$ and $z$.   The formulation (\ref{eq:LPCC})
includes the case where the ``design'' variable $x$ is 
subject to its own linear constraints:
\begin{equation} \label{eq:LPCC with x constraint}
\begin{array}{ll}
\displaystyle{
\operatornamewithlimits{\mbox{\bf minimize}}_{(x,y,z) 
\in \mathbb{R}^{n + 2m}}
} & \theta \, \triangleq \, 
c^{\top}x + e^{\top}y + f^{\top}z \\ [0.1in]
\mbox{\bf subject to} & Ax + By + Cz \, = \, b; \
Ex \, \geq \, g \\ [0.1in]
\mbox{\bf and} & 0 \, \leq \, y \, \perp \, z \, \geq \, 0,
\end{array} \end{equation}
with $E \in \mathbb{R}^{j \times n}$ and 
$g \in \mathbb{R}^j$.
It is obvious that (\ref{eq:LPCC with x constraint}) can 
be cast
in the form (\ref{eq:LPCC}) with the introduction of the
nonnegative slack variable $s \triangleq Ex - g \geq 0$
and with the augmentations:
\[
\wh{x} \, \triangleq \, \left( \begin{array}{c}
x \\ [5pt]
s
\end{array} \right); \ 
\wh{c} \, \triangleq \, \left( \begin{array}{c}
c \\ [5pt]
0
\end{array} \right); \
\wh{A} \, \triangleq \, \left[ \begin{array}{ll}
A & \ 0 \\ [5pt]
E & \ \mathbb{I}
\end{array} \right]; \ 
\wh{B} \, \triangleq \, \left[ \begin{array}{l}
B \\ [5pt]
0
\end{array} \right]; \ 
\wh{C} \, \triangleq \, \left[ \begin{array}{l}
C \\ [5pt]
0
\end{array} \right]; \ \mbox{ and } \
\wh{b} \, \triangleq \, \left[ \begin{array}{l}
b \\ [5pt]
g
\end{array} \right],
\]
where $\mathbb{I}$ is the identity matrix, resulting in
\begin{equation} \label{eq:LPCC augmented}
\begin{array}{cl}
\displaystyle{
\operatornamewithlimits{\mbox{\bf minimize}}_{(x,y,z) \,
\in \, \mathbb{R}^{n+2m}; \ s \, \in \, \mathbb{R}^j_+}
} & \theta \, \triangleq \, 
\wh{c}^{\, \top} \wh{x} + e^{\top}y + f^{\top}z \\ [0.1in]
\mbox{\bf subject to} & \wh{A} \, \wh{x} + \wh{B}y + \wh{C}z 
+ \wh{D}s \, = \, b \\ [0.1in]
\mbox{\bf and} & 0 \, \leq \, y \, \perp \, z \, \geq \, 0.
\end{array} \end{equation}
We mention two important special cases of the LPCC:

\gap

\noindent {\bf Forward quadratic programs:} 
Consider a quadratic program (QP):
\begin{equation} \label{eq:forward QP}
\begin{array}{ll}
\displaystyle{
\operatornamewithlimits{\mbox{\bf minimize}}_{x \in \mathbb{R}^n}
} & c^{\top}x + \thalf \, x^{\top}Qx \\ [0.1in]
\mbox{\bf subject to} & Dx \, \geq \, d
\end{array} \end{equation}
where $Q$ is an $n \times n$ symmetric indefinite
matrix, $D \in \mathbb{R}^{m \times n}$, and
$d$ is an $m$-vector.  It is a classical result 
\cite{GiannessiTomasin73}
that if (\ref{eq:forward QP}) has a finite optimal 
solution, then the problem is equivalent to the 
LPCC, whose constraints 
are just the Karush-Kuhn-Tucker (KKT) conditions of the QP: 
\begin{equation} \label{eq:LPCC of QP}
\begin{array}{ll}
\displaystyle{
\operatornamewithlimits{\mbox{\bf minimize}}_{
(x,s,\lambda) \in \mathbb{R}^{n + 2m}}
} & \thalf \,
( c^{\top}x + d^{\top} \lambda )
\\ [0.1in]
\mbox{\bf subject to} & Qx - D^{\top} \lambda \, = \, -c 
\\ [0.1in]
& Dx - s \, = \, d \\ [0.1in]
\mbox{\bf and} & 0 \, \leq \, s \, \perp \, 
\lambda \, \geq \, 0,
\end{array} \end{equation}
where $s$ and $\lambda$ are the slack and multiplier 
vectors of 
the constraint of (\ref{eq:forward QP}),
respectively.   While there is an extension of this result 
without the solvability pre-condition, we
will focus in this paper only on the solvable QPs.  A 
noteworthy remark about the two problems
(\ref{eq:forward QP}) and (\ref{eq:LPCC of QP}) is that 
they are equivalent only in terms of their globally optimal
solutions, i.e., if $\bar{x}$ is a globally optimal 
solution of 
the former, then the pair $( \bar{x},\bar{\lambda} )$,
where $\bar{\lambda}$ is an optimal multiplier associated 
with $\bar{x}$, is globally optimal 
for (\ref{eq:LPCC of QP}),
and vice versa.  We will discuss more about the connections 
between the locally optimal solutions of these two
problems subsequently.  

\gap

\noindent {\bf Inverse affine problems:}  
In general, an inverse optimization problem
is one where the input to a (forward) optimization problem
is sought so that the modified input and the associated
optimal solution satisfy a secondary criterion.   When the 
forward problem is a convex quadratic program
(\ref{eq:forward QP}) where $Q$ is (symmetric)
positive semidefinite, and the secondary criterion is to 
minimize the deviation of the triple $(c,d,x)$, where $x$ is
an optimal solution of (\ref{eq:forward QP}), 
measured in a polyhedral norm, say $\ell_1$,
from a given triple $( \wh{c}, \wh{d}, \wh{x})$,
then this inverse (convex) QP can be formulated as the 
following LPCC:
\begin{equation} \label{eq:inverse QP}
\begin{array}{ll}
\displaystyle{
\operatornamewithlimits{\mbox{\bf minimize}}_{ (c,d,x) \in 
\mathbb{R}^{n+m+n}}
} & \| \, ( c,d,x ) - ( \wh{c}, \wh{d}, \wh{x} ) \, \|_1 
\\ [0.1in]
\mbox{\bf subject to} & Qx - D^{\top} \lambda \, = \, -c 
\\ [0.1in]
& Dx - s \, = \, d \\ [0.1in]
\mbox{\bf and} & 0 \, \leq \, s \, \perp \, \lambda \, \geq \, 0.
\end{array} \end{equation} 
In principle, the matrix $Q$ does not need to be positive semidefinite (or even symmetric) in the latter 
LPCC (\ref{eq:inverse QP});
in this case, the problem is an instance of an 
inverse LPCC and its constraints are the stationary 
conditions
of the indefinite QP (\ref{eq:forward QP}), or equivalently,
an affine variational inequality.  In an
inverse problem, the design variable $x$ is often subject to
linear constraints, leading to the augmented 
LPCC (\ref{eq:LPCC with x constraint}); see
(\ref{eq:LPCC in InvAVI}) for an instance that we employ in
the numerical experiments.

\section{The MILP formulations} 
\label{sec:MILP formulations}

Suppose that the polyhedral set
\[
P \, \triangleq \, \left\{ \, v \, \triangleq \, 
( x,y,z ) \, \in \, \mathbb{R}^n \times \mathbb{R}^{2m}_+ 
\, \mid \, Ax + By + Cz \, = \, b \, \right\}
\]
is bounded so that for a suitable scalar $\overline{M}$, 
\begin{equation} \label{eq:bound M}
\max_{1 \leq i \leq m} \, \max( \, y_i, \, z_i \, ) 
\, \leq \,
\overline{M}, \epc \forall \, ( y,z ) \mbox{ for which
$\exists \, x$ such that $( x,y,z ) \in P$}.
\end{equation}
This boundedness assumption is commonly used in 
the integer programming literature to account for bilinear
structures. In case of unbounded variables, other 
formulations that are supported by MIP solvers, such as 
reformulation with special ordered sets of type 1, or the
application of the logical Benders method for the LPCC
as described in
\cite{HuMitchellPangBennetKunapuli08}, may also be 
possible for use within the algorithm to be proposed. 
Nevertheless, the study of this advanced issue is 
beyond the scope of our present work.
It follows from (\ref{eq:bound M}) that the 
LPCC \eqref{eq:LPCC} is equivalent to the MILP:
\begin{equation} \label{eq:FIP of LPCC}
\begin{array}{ll}
\displaystyle{
\operatornamewithlimits{\mbox{\bf minimize}}_{(x,y,z,w) \in \mathbb{R}^{n + 3m}}
} & c^{\top}x + e^{\top}y + f^{\top}z \\ [0.1in]
\mbox{\bf subject to} & Ax + By + Cz \, = \, b \\ [0.1in]
\mbox{\bf and} & \left\{ \begin{array}{l}
0 \, \leq \, y_i \, \leq \, \overline{M} \, w_i \\ [0.1in] 
0  \, \leq \, z_i \, \leq \, \overline{M} \, ( 1 - w_i ) \\ [0.1in]
w_i \, \in \, \{ \, 0, \, 1 \, \}
\end{array} \right\} \ i \, = \, 1, \cdots, m,
\end{array} \end{equation}
where the complementarity condition that either $y_i=0$ or $z_i=0$ is captured by introducing a binary variable $w_i$ for each $i$.
We call \eqref{eq:FIP of LPCC} the {\sl full MILP} formulation of the LPCC (\ref{eq:LPCC}).  The computational workhorse of the
PIP method to be presented later is a reduced-MILP that is defined relative to a feasible iterate 
$\mathbf{\bar{v}} \triangleq ( \bar{x},\bar{y},\bar{z} )$ 
of (\ref{eq:LPCC}). 
Admittedly, such a feasible iterate may not be readily available in general; 
nevertheless for the QP-derived LPCC (\ref{eq:LPCC of QP}), a feasible solution of the LPCC is just
a stationary solution of the QP (\ref{eq:forward QP}) which
(if exists) can be obtained readily by a host of standard QP
methods. A similar remark applies to (\ref{eq:inverse QP}).   Throughout this paper, we
assume that we have on hand a feasible $\mathbf{\bar{v}}$ to (\ref{eq:LPCC}).
Given $\mathbf{\bar{v}}$, we let ${\cal M}_c(\mathbf{\bar{v}})$, ${\cal M}_y^+(\mathbf{\bar{v}})$, and
${\cal M}_z^+(\mathbf{\bar{v}})$ be three index sets partitioning $\{ 1, \cdots, m \}$ such that
\begin{equation} \label{eq:index sets condition}
{\cal M}_y^+(\mathbf{\bar{v}}) \, \subseteq \, \{ i \, \mid \bar{y}_i > 0 \} \ \mbox{ and } \
{\cal M}_z^+(\mathbf{\bar{v}}) \, \subseteq \, \{ i \, \mid \bar{z}_i > 0 \},
\end{equation}
which imply ${\cal M}_c(\mathbf{\bar{v}}) \supseteq 
\{ i \, \mid \bar{y}_i = \bar{z}_i = 0 \}$.  Define the 
reduced-LPCC:
\begin{equation} \label{eq:partial LPCC}
\mbox{reduced-LPCC}(\mathbf{\bar{v}}) 
\, : \, \left\{ \begin{array}{ll}
\displaystyle{
\operatornamewithlimits{\mbox{\bf minimize}}_{(x,y,z) \in \mathbb{R}^{n + 2m}}
} & c^{\top}x + e^{\top}y + f^{\top}z \\ [0.1in]
\mbox{\bf subject to} & Ax + By + Cz \, = \, b \\ [0.1in]
& 0 \, \leq \, y_i \, \perp \, z_i \, \geq \, 0, \epc 
i \, \in \, {\cal M}_c(\mathbf{\bar{v}}) \\ [0.1in]
& 0 \, = \, y_i, \ z_i \, \geq \, 0, \hspace{0.3in}  
i \, \in \, {\cal M}_z^+(\mathbf{\bar{v}})  \\ [0.1in]
\mbox{ \bf and } & 0 \, = \, z_i, \ 
y_i \, \geq \, 0, \hspace{0.3in}  i \, \in \, 
{\cal M}_y^+(\mathbf{\bar{v}}),
\end{array} \right.
\end{equation} 
which has a {\sl reduced-MILP formulation}:
\begin{equation} \label{eq:partial MILP}
\mbox{reduced-MILP}(\mathbf{\bar{v}}) 
\, : \, \left\{ \begin{array}{ll}
\displaystyle{
\operatornamewithlimits{\mbox{\bf minimize}}_{(x,y,z,w) 
\in \mathbb{R}^{n + 3m}}
} & c^{\top}x + e^{\top}y + f^{\top}z \\ [0.1in]
\mbox{\bf subject to} & Ax + By + Cz \, = \, b \\ [0.1in]
& \left\{ \begin{array}{l}
0 \, \leq \, y_i \, \leq \, \overline{M} \, w_i \\ [0.1in] 
0  \, \leq \, z_i \, \leq \, \overline{M} \, ( 1 - w_i ) 
\\ [0.1in]
w_i \, \in \, \{ \, 0, \, 1 \, \}
\end{array} \right\} \ i \, \in \, {\cal M}_c(\mathbf{\bar{v}}) 
\\ [0.4in]
& 0 \, = \, y_i, \ z_i \, \geq \, 0, \epc  
i \, \in \, {\cal M}_z^+(\mathbf{\bar{v}})  \\ [0.1in]
\mbox{ \bf and } & 0 \, = \, z_i, \ y_i \, \geq \, 0, 
\epc i \, \in \, {\cal M}_y^+(\mathbf{\bar{v}}).
\end{array} \right.
\end{equation}
Since these two reduced problems at the core of 
the computational method (to be proposed) for solving 
(\ref{eq:LPCC}), it would be of interest to understand
their role relative to the original LPCC (\ref{eq:LPCC}).  
This role is presented
in the following result which characterizes
the local optimality of a feasible triple $\mathbf{\bar{v}}$ 
to the LPCC (\ref{eq:LPCC}) in terms of a fixed-point
property with respect to the 
associated reduced-LPCC 
(\ref{eq:partial LPCC}) at $\mathbf{\bar{v}}$.

\begin{proposition} \label{pr:partial MILP and full} \rm
Suppose that $\overline{M}$ is such that (\ref{eq:bound M}) 
holds.  Let the triple 
$\mathbf{\bar{v}} \triangleq ( \bar{x},\bar{y},\bar{z} )$ 
be feasible to (\ref{eq:LPCC}).  
Let ${\cal M}_y^+(\mathbf{\bar{v}})$ and 
${\cal M}_z^+(\mathbf{\bar{v}})$ be an arbitrary
pair of index sets satisfying (\ref{eq:index sets condition}) 
and let ${\cal M}_c(\mathbf{\bar{v}})$ be the complement
of ${\cal M}_y^+(\mathbf{\bar{v}}) \cup {\cal M}_z^+
(\mathbf{\bar{v}})$ in $\{ 1, \cdots, m \}$.
Then $\mathbf{\bar{v}}$ is locally optimal for the 
reduced-LPCC$(\mathbf{\bar{v}})$
if and only if $\mathbf{\bar{v}}$ is locally optimal for 
the LPCC (\ref{eq:LPCC}).
\end{proposition}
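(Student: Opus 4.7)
The plan is to establish the equivalence by showing that, in a sufficiently small neighborhood of $\bar{\mathbf{v}}$, the feasible regions of the LPCC \eqref{eq:LPCC} and the partial-LPCC \eqref{eq:partial LPCC} coincide. Since both problems share exactly the same objective $c^{\top}x + e^{\top}y + f^{\top}z$, once the local feasible sets are shown to agree near $\bar{\mathbf{v}}$, local optimality with respect to one set is automatically local optimality with respect to the other.

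First, I would verify that $\bar{\mathbf{v}}$ is itself feasible for partial-LPCC$(\bar{\mathbf{v}})$. For $i\in\mathcal{M}_y^+(\bar{\mathbf{v}})$ we have $\bar{y}_i>0$ by \eqref{eq:index sets condition}, so the complementarity condition in \eqref{eq:LPCC} forces $\bar{z}_i=0$; similarly, for $i\in\mathcal{M}_z^+(\bar{\mathbf{v}})$, $\bar{y}_i=0$ and $\bar{z}_i>0$. For $i\in\mathcal{M}_c(\bar{\mathbf{v}})$ the constraint $0\le y_i\perp z_i\ge 0$ is simply inherited. Hence $\bar{\mathbf{v}}$ satisfies every constraint of partial-LPCC$(\bar{\mathbf{v}})$.

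The heart of the argument is the reverse containment of local feasible sets. Every feasible point of partial-LPCC$(\bar{\mathbf{v}})$ is trivially feasible for \eqref{eq:LPCC} (the partial problem just prescribes which branch of complementarity is taken on the indices in $\mathcal{M}_y^+(\bar{\mathbf{v}})\cup\mathcal{M}_z^+(\bar{\mathbf{v}})$). For the converse, pick $\varepsilon>0$ small enough that
\[
\varepsilon \, < \, \min\!\left\{\,\min_{i\in\mathcal{M}_y^+(\bar{\mathbf{v}})} \bar{y}_i,\ \min_{i\in\mathcal{M}_z^+(\bar{\mathbf{v}})} \bar{z}_i\,\right\},
\]
with the convention that the minimum over an empty set is $+\infty$. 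If $\mathbf{v}=(x,y,z)$ is feasible for \eqref{eq:LPCC} with $\|\mathbf{v}-\bar{\mathbf{v}}\|_\infty<\varepsilon$, then for $i\in\mathcal{M}_y^+(\bar{\mathbf{v}})$ we get $y_i>\bar{y}_i-\varepsilon>0$, and complementarity in \eqref{eq:LPCC} forces $z_i=0$; symmetrically, for $i\in\mathcal{M}_z^+(\bar{\mathbf{v}})$ we obtain $y_i=0$ and $z_i\ge 0$. On $\mathcal{M}_c(\bar{\mathbf{v}})$ the complementarity constraint is identical in the two problems. Therefore $\mathbf{v}$ is feasible for partial-LPCC$(\bar{\mathbf{v}})$, so the feasible sets of the two problems agree on the open $\varepsilon$-ball around $\bar{\mathbf{v}}$.

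With the local agreement of feasible sets in hand, both implications follow immediately: if $\bar{\mathbf{v}}$ is locally optimal for one problem on some neighborhood, intersecting that neighborhood with the $\varepsilon$-ball above produces a common neighborhood on which $\bar{\mathbf{v}}$ minimizes the same objective over the same feasible set, hence is locally optimal for the other problem. I do not anticipate a serious obstacle; the only subtlety is the precise choice of the radius $\varepsilon$ so that the positivity of $\bar{y}_i$ (resp.\ $\bar{z}_i$) on $\mathcal{M}_y^+(\bar{\mathbf{v}})$ (resp.\ $\mathcal{M}_z^+(\bar{\mathbf{v}})$) is preserved throughout the neighborhood, which is what triggers the collapse of the complementarity condition to the correct branch.
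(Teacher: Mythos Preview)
Your proposal is correct and follows essentially the same approach as the paper: both arguments hinge on the observation that the feasible set of the partial-LPCC is contained in that of the LPCC globally, while in a sufficiently small neighborhood of $\bar{\mathbf{v}}$ the reverse containment also holds because the strict positivity of $\bar{y}_i$ (resp.\ $\bar{z}_i$) on $\mathcal{M}_y^+(\bar{\mathbf{v}})$ (resp.\ $\mathcal{M}_z^+(\bar{\mathbf{v}})$) forces the complementarity to resolve to the prescribed branch. Your version is slightly more explicit in quantifying the neighborhood radius and packages both implications into a single local-coincidence-of-feasible-sets statement, but the substance is identical to the paper's proof.
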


\begin{proof}  Since the feasible set of 
(\ref{eq:partial LPCC}) is obviously a subset of
the feasible set of (\ref{eq:LPCC}), and
$\mathbf{\bar{v}}$ is also feasible to (\ref{eq:partial LPCC}),
the ``if'' statement is obvious.  Conversely, suppose 
$\mathbf{\bar{v}}$ is locally optimal for the
reduced-LPCC $(\mathbf{\bar{v}})$.  
To show that $\mathbf{\bar{v}}$ is locally optimal for the 
LPCC (\ref{eq:LPCC}), let ${\cal N}$ be a neighborhood of 
$\mathbf{\bar{v}}$ within which 
$\mathbf{\bar{v}}$ is optimal for (\ref{eq:partial LPCC}).
Let
\[
\bar{\cal N} \, \triangleq \, {\cal N} \, \displaystyle{
\bigcap_{i \in {\cal M}_y^+(\bar{\boldsymbol{v}})}
} \, \left\{ \, \boldsymbol{v} \, \in \, 
\mathbb{R}^{n+2m} \, \left| \
| \, y_i - \bar{y}_i \, | \, < \, \displaystyle{
\frac{\bar{y}_i}{2}
} \, \right. \right\} \, \displaystyle{
\bigcap_{i \in {\cal M}_z^+(\bar{\boldsymbol{v}})}
} \, \left\{ \, \boldsymbol{v} \, \in \, \mathbb{R}^{n+2m} 
\, \left| \ | \, z_i - \bar{z}_i \, | \, < \, \displaystyle{
\frac{\bar{z}_i}{2}
} \, \right. \right\}.
\]

\noindent It then follows that for all $v = (x,y,z) \in \bar{\cal N}$
with $(y,z) \geq 0$, we have $y_i > 0$ for all 
$i \in {\cal M}_y^+(\mathbf{\bar{v}})$
and $z_i > 0$ for all 
$i \in {\cal M}_z^+(\mathbf{\bar{v}})$. 
Thus, any such $(x,y,z)$ that
is feasible to (\ref{eq:LPCC}) must be feasible to 
(\ref{eq:partial LPCC}). 
\end{proof}

\subsection{Suboptimal KKT solutions in the forward QP}

As previously mentioned, the forward 
QP \eqref{eq:forward QP} 
admits an equivalent LPCC reformulation in terms of globally
optimal solutions. Unfortunately, such equivalence fails 
to hold for locally optimal solutions, as illustrated by 
Example~\ref{ex:a counterex} below.  This non-equivalence 
implies that, despite 
Proposition~\ref{pr:partial MILP and full}, a locally 
optimal solution to the reduced-LPCC, 
(which is a 
suboptimal KKT solution) does not necessarily yield a
locally optimal solution to the original QP.  This raises 
two 
natural questions: (i) to what solution of the original QP 
does an optimal solution of a reduced-LPCC 
correspond; (ii) under what conditions can the local 
optimality be asserted 
as in Proposition~\ref{pr:partial MILP and full} between 
the local minimizers of the restricted-KKT problem and the 
QP~\eqref{eq:forward QP}?  We address them in sequence after
presenting the following example.

\begin{example} \label{ex:a counterex}  \rm

Consider the nonconvex quadratic program:
\begin{equation} \label{eq:illustrative QP} 
\displaystyle{
\operatornamewithlimits{\mbox{\bf minimize}}_{
x \, \in \, [ \, 0,1 \, ]}
} \ - \thalf \, ( x^2 - x \, ),
\end{equation}
which has two global minimizers: $x = 0, 1$.
The equivalent (in terms of globally optimal solutions) LPCC
formulation is:
\[ \begin{array}{ll}
\displaystyle{
\operatornamewithlimits{\mbox{\bf minimize}}_{x,\lambda,\mu}
} & \thalf \, x - \lambda \\ [0.1in]
\mbox{\bf subject to} & 0 \, = \, \thalf - x + \lambda - \mu 
\\ [0.1in]
& 0 \, \leq \, x \, \perp \, \mu \, \geq \, 0 \\ [0.1in]
\mbox{\bf and } & 0 \, \leq \, 1 - x \, \perp \, \lambda \, 
\geq \, 0.
\end{array} 
\]
The triple $(\bar{x},\bar{\lambda},\bar{\mu} ) = 
( \thalf, 0, 0 )$ satisfies the KKT conditions
and $x = 1/2$ is a global maximizer of the given 
QP (\ref{eq:illustrative QP}).  This KKT triple is also isolated
in that there is no other KKT triple near it.   This observation
illustrates the first (negative) connection between the LPCC 
and the QP; namely,

\gap

\noindent 
$\bullet $ an isolated KKT point is not necessarily 
a local minimizer of the QP.

\gap

\noindent 
Moreover, since $\mathbf{\bar{v}} \triangleq 
(\bar{x},\bar{\lambda},\bar{\mu} )$ is an
isolated KKT triple, it must be a local minimizer
of the LPCC.  This consequence yields the second (negative)
connection between the LPCC and the QP; namely,

\gap

\noindent 
$\bullet $ a local minimizer of the LPCC is not
necessarily a local minimizer of the QP.  \hfill $\Box$ 
\end{example}

\subsubsection{A perspective from polyhedral geometry}

For the QP-derived LPCC (\ref{eq:LPCC of QP}), the 
reduced-LPCC corresponding to a 
KKT triple $\mathbf{\bar{v}} \triangleq 
( \bar{x},\bar{s},\bar{\lambda} )$ is formed with the pair
of index sets ${\cal M}_s^+(\mathbf{\bar{v}})$ and 
${\cal M}_{\lambda}^+(\mathbf{\bar{v}})$ satisfying
\begin{equation} \label{eq:KKT ndex sets condition}
{\cal M}_s^+(\mathbf{\bar{v}}) \, \subseteq \, 
\{ i \, \mid \bar{s}_i > 0 \} \ \mbox{ and } \
{\cal M}_{\lambda}^+(\mathbf{\bar{v}}) \, \subseteq \, 
\{ i \, \mid \bar{\lambda}_i > 0 \},
\end{equation}
along with the complement ${\cal M}_c(\mathbf{\bar{v}})$ 
of ${\cal M}_s^+(\mathbf{\bar{v}}) \cup 
{\cal M}_{\lambda}^+(\mathbf{\bar{v}})$ in 
$\{ 1, \cdots, m \}$:
\begin{equation} \label{eq:restricted KKT}
\begin{array}{l}
\mbox{restricted-KKT}(\mathbf{\bar{v}}) \\ [3pt]
\equiv \mbox{ a partial LPCC}; \\ [5pt]
\mbox{optimal solution set} \\ [3pt]
\mbox{denoted $S(\mathbf{\bar{v}})$} 
\end{array}
\left\{ \begin{array}{ll}
\displaystyle{
\operatornamewithlimits{\mbox{\bf minimize}}_{(x,s,\lambda) 
\in \mathbb{R}^{n + 2m}}
} & c^{\top}x + d^{\top} \lambda \\ [0.1in]
\mbox{\bf subject to} & Qx - D^{\top} \lambda \, = \, -c 
\\ [0.1in]
& Dx - s \, = \, d \\ [0.1in]
& 0 \, \leq \, s_i \, \perp \, \lambda_i \, \geq \, 0, 
\epc i \, \in \, {\cal M}_c(\mathbf{\bar{v}}) \\ [0.1in]
& 0 \, = \, s_i \, \leq \, \lambda_i, \hspace{0.45in} 
i \, \in \, {\cal M}_{\lambda}^+(\mathbf{\bar{v}}) 
\\ [0.1in]
\mbox{\bf and} & 0 \, = \, \lambda_i \, \leq \, s_i, 
\hspace{0.45in} i \, \in \, {\cal M}_s^+(\mathbf{\bar{v}}),
\end{array} \right.
\end{equation}
which has an equivalent MILP formulation:
\begin{equation} \label{eq:MILP restricted KKT}
\mbox{\begin{tabular}{l}
MILP of \\ 
restricted-KKT$(\mathbf{\bar{v}})$ :
\end{tabular}} \left\{ \begin{array}{ll}
\displaystyle{
\operatornamewithlimits{\mbox{\bf minimize}}_{
(x,s,\lambda,w) \in \mathbb{R}^{n + 3m}}
} & c^{\top}x + d^{\top} \lambda \\ [0.1in]
\mbox{\bf subject to} & Qx - D^{\top} \lambda \, = \, -c 
\\ [0.1in]
& Dx - s \, = \, d \\ [0.1in]
& \left\{ \begin{array}{l}
0 \, \leq \, s_i \, \leq \, \overline{M} \, w_i \\ [0.1in]
0 \, \leq \, \lambda_i \, \leq \, \overline{M} ( 1 - w_i ) 
\\ [0.1in]
w_i \, \in \, \{ \, 0, \, 1 \, \}
\end{array} \right\}  \ \in \, {\cal M}_c(\mathbf{\bar{v}}) 
\\ [0.4in]
& 0 \, = \, s_i \, \leq \, \lambda_i, \hspace{0.45in} 
i \, \in \, {\cal M}_{\lambda}^+(\mathbf{\bar{v}}) \\ [0.1in]
\mbox{\bf and} & 0 \, = \, \lambda_i \, \leq \, s_i, 
\hspace{0.45in} i \, \in \, {\cal M}_s^+(\mathbf{\bar{v}}),
\end{array} \right.
\end{equation}
provided that the feasible set of the 
QP (\ref{eq:forward QP}) 
has a strictly feasible point that ensures the existence of 
the scalar $\overline{M}$.  

\gap

Clearly, the feasible set of the 
restricted-KKT$(\mathbf{\bar{v}})$ is a subset of the 
feasible region of \eqref{eq:LPCC of QP}; so is
the optimal set $S(\mathbf{\bar{v}})$.  For any set
$S \subseteq \mathbf{R}^{n+2m}$, let
$\proj_x(S)$ be the projection 
of $S$ onto the $x$-space.  For ease of reference, let 
$q(x)\triangleq c^\top x + \thalf x^\top Q x$ and 
$\ell(x,\lambda) \triangleq c^\top x+d^\top \lambda$ be the 
objectives of the QP~\eqref{eq:forward QP} and the 
LPCC~\eqref{eq:LPCC of QP} respectively.  Observe that for any 
feasible solution $( x,\lambda,s )$ to the 
LPCC~\eqref{eq:LPCC of QP}, one always has 
\begin{equation}\label{eq:equal obj}
\ell( x,\lambda) \, = \, c^\top x + ( Dx - s )^\top \lambda 
\, = \, c^\top x + ( Qx + c )^\top x \, = \, 2q(x).
\end{equation}
Define $P^{\, \prime} \triangleq 
\left\{ (x,s,\lambda) \in \R^{n+2m} \mid 
Qx-D^\top\lambda=-c,\,Dx-s=d,\,s\ge0,\,\lambda \ge 0 \right\}$. 
For any set $R$, the convex hull of $R$ is denoted by 
$\conv(R)$.  We recall the distinction between a 
KKT triple and a KKT point of the QP (\ref{eq:forward QP}).
The former is a triple $(x,s,\lambda)$ satisfying the KKT 
conditions of the QP, whereas the latter is a (primal 
feasible) vector $x$ for which
$\lambda$ exists such that $(x,s,\lambda)$ is a KKT triple. 
In this terminology, Proposition~\ref{pr:conv} states that 
for any subset $S$ of KKT triples, if $(x,s,\lambda)$ is an
optimal KKT triple of the linear function $\ell$ on $S$, 
then,
$x$ is an optimal solution of the quadratic function $q$ on
$\conv(\proj_x(S))$.  Two consequences of this result are 
provided,
one of which answers the first question (i) raised above, 
and the other answers partially the second question (ii).

\begin{proposition}\label{pr:conv} \rm 
Let $S$ be a subset of the 
feasible region of \eqref{eq:LPCC of QP}.
If $(x^*,s^*,\lambda^*) \in \displaystyle{
\operatornamewithlimits{\mbox{\bf argmin}}_{
(x,s,\lambda) \in S}
} \, \ell(x,\lambda)$, then
$x^* \in\displaystyle{
\operatornamewithlimits{\mbox{\bf argmin}}_{
x \in \conv( \proj_x(S) )}
} \,  \, q(x)$.  
\end{proposition}

\begin{proof}
Consider any $\wh{x} \in\conv(T)$, where
$T \triangleq \proj_x S$.  We have 
$\conv(T) = \proj_x \conv(S)$, implying that there 
exists $( \wh{s},\wh{\lambda} )$ such that 
$(\wh{x},\wh{s},\wh{\lambda} )$ belongs to $\conv(S)$.  Since
$\ell(\bullet)$ is a linear function, we can deduce 
$(x^*,s^*,\lambda^*) \in \displaystyle{
\operatornamewithlimits{\mbox{\bf argmin}}_{
(x,s,\lambda) \in \conv(S)}
} \, \ell(x,\lambda)$; thus 
$\ell(x^*,\lambda^*) \leq \ell( \wh{x},\wh{\lambda})$. 
Moreover, $S \subseteq P^{\, \prime}$ implies that 
$( \wh{x},\wh{s},\wh{\lambda} )\in P^{\, \prime}$. 
Hence we have
\[ \begin{array}{lll}
2q(x^*) & = & \ell(x^*,\lambda^*) 
\, \leq \, \ell( \wh{x},\wh{\lambda} ) \\ [0.1in]
& = & c^\top \wh{x} + d^\top \wh{\lambda} \\ [0.1in]
& = & c^\top \wh{x} + d^\top \wh{\lambda} + 
\wh{x}^\top \left(c + Q \wh{x} - D^\top \wh{\lambda} \right) 
\\ [0.1in]
& = & 2c^\top \wh{x} + \wh{x}^\top Q \wh{x} - 
\wh{\lambda}^\top ( D \wh{x} - d ) \\ [0.1in]
& \leq & 2c^\top \wh{x} + \wh{x}^\top Q\wh{x} \, = \, 
2q( \wh{x} ),
\end{array} \]
establishing the desired conclusion.
\end{proof}

The above proposition yields the following corollary.

\begin{corollary} \label{co:two consequences} \rm
The following two statements hold for
a triple $\mathbf{v}^* \triangleq (x^*,s^*,\lambda^*)$.

\gap

\noindent {\bf (a)} If $\mathbf{v}^*$ is an optimal 
solution of the 
restricted KKT$(\mathbf{v^*})$, then $x^*$ is a minimizer
of the quadratic function $q(x)$ on 
$\conv(\proj_x(S(\mathbf{v^*})))$.

\gap

\noindent {\bf (b)} If $\mathbf{v}^*$ is a local minimizer
of the qp-induced LPCC (\ref{eq:LPCC of QP}), then 
a neighborhood ${\cal N}$ of $\mathbf{v}^*$ exists 
such that  
$x^* \in \displaystyle{
\operatornamewithlimits{\mbox{\bf argmin}}_{
x \in \conv( \proj_x( \bar{S} \, \cap \, {\cal N} ) )}
} \, q(x)$, where $\bar{S}$ is the set of KKT triples of the
QP~(\ref{eq:forward QP}).
\end{corollary}

\begin{proof}  
For the proof of statement (a), it suffices to take
$S = S(\mathbf{v^*})$ in Proposition~\ref{pr:conv}.  
For the proof of (b), it
suffices to take ${\cal N}$ to be a neighborhood of
$\mathbf{v^*}$ within which $\mathbf{v^*}$ is a minimizer of
$\ell$ on $\bar{S}$ and take $S = \bar{S} \cap {\cal N}$
in the same proposition.
\end{proof}

Proposition~\ref{pr:conv} is clear if $Q$ is negative 
semidefinite because there always exists a minimizer at 
an extreme point of the feasible region for a concave 
program, provided that such a problem has an optimal
solution.  On the other hand, if $q(\bullet)$ is a 
strictly convex 
function, then the set of KKT triples is a singleton; hence
Proposition~\ref{pr:conv} is not 
informative in this case.  This remark leads to a refinement
of the above proposition.

Let $L$ be any linear subspace of $\R^n$ such that 
$x^\top Q x\ge 0$ for all $x\in L$, i.e.\ $q(\bullet)$ is 
convex over $ L$.  For instance, because $Q$ admits an 
orthonormal decomposition $Q=Q^+-Q^-$, where $Q^+$ and 
$Q^-$ are positive semidefinite, one can take $L$ to be 
any linear subspace formed from 
$\text{Range}(Q^{+})$. 
The result below (Proposition~\ref{pr:conv-ext})
easily recovers the well-known result
(Corollary~\ref{co:psd KKY suff}) that the KKT conditions
are sufficient for global optimality for a convex 
(quadratic) program.  This corollary is stated only to 
highlight that the
proposition is a generalization of this classical result.

\begin{proposition} \label{pr:conv-ext} \rm
Let $S$ be any {\sl convex} subset of the 
feasible region of \eqref{eq:LPCC of QP}.
For any triple $(x^*,s^*,\lambda^*) \in \displaystyle{
\operatornamewithlimits{\mbox{\bf argmin}}_{
(x,s,\lambda) \in S}
} \, \ell(x,\lambda)$, it holds that
$x^* \in \displaystyle{
\operatornamewithlimits{\mbox{\bf argmin}}_{
x \, \in \, T_L \, \cap \, P}
} \, q(x)$, for any linear subspace $L$ on which $Q$
is positive semidefinite, where 
$T_L \triangleq \proj_x S + L$.
\end{proposition}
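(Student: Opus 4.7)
The plan is to mirror the structure of the proof of Proposition~\ref{pr:conv}, replacing the use of the convex hull of the $x$-projection with a projection-plus-subspace shift. The two new ingredients are (i) the positive semidefiniteness of $Q$ on $L$, which controls the second-order increment along directions in $L$, and (ii) the KKT identity for a suitably chosen base triple in $S$, which turns the first-order increment into a complementarity expression that is killed by the slackness conditions.

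First I would fix any $\wh{x}\in T_L\cap P$ and decompose it as $\wh{x}=x_0+v$ with $x_0\in\proj_x S$ and $v\in L$. Picking any $(s_0,\lambda_0)$ such that $(x_0,s_0,\lambda_0)\in S$, the fact that $S$ lies in the feasible region of \eqref{eq:LPCC of QP} yields the KKT relations
\[
Qx_0-D^{\top}\lambda_0 \,=\, -c, \quad Dx_0-s_0 \,=\, d, \quad s_0,\lambda_0 \,\geq\, 0, \quad s_0^{\top}\lambda_0 \,=\, 0.
\]
Next I would combine the optimality of $(x^*,s^*,\lambda^*)$ for $\ell$ on $S$ with the identity \eqref{eq:equal obj} applied to both $(x^*,s^*,\lambda^*)$ and $(x_0,s_0,\lambda_0)$ to deduce $2q(x^*)=\ell(x^*,\lambda^*)\leq\ell(x_0,\lambda_0)=2q(x_0)$, i.e.\ $q(x^*)\leq q(x_0)$.

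Then I would expand
\[
q(\wh{x}) \,=\, q(x_0) + (c+Qx_0)^{\top}v + \tfrac{1}{2}v^{\top}Qv
\]
and use the KKT equation $c+Qx_0=D^{\top}\lambda_0$ to rewrite the cross term as $\lambda_0^{\top}Dv$. The condition $\wh{x}\in P$ gives $D\wh{x}\geq d$, and since $Dx_0=d+s_0$ this yields $Dv\geq-s_0$. With $\lambda_0\geq 0$ and $\lambda_0^{\top}s_0=0$, it follows that $\lambda_0^{\top}Dv\geq-\lambda_0^{\top}s_0=0$. Finally, because $v\in L$ and $Q$ is psd on $L$, one has $\tfrac{1}{2}v^{\top}Qv\geq 0$. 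Chaining these bounds gives $q(\wh{x})\geq q(x_0)\geq q(x^*)$, which is the desired conclusion.

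The argument is essentially bookkeeping; the only step that requires a bit of insight is seeing the substitution $c+Qx_0=D^{\top}\lambda_0$ that turns the linear-in-$v$ remainder into $\lambda_0^{\top}Dv$, so that the primal feasibility of $\wh{x}$ together with complementarity of $(s_0,\lambda_0)$ makes the term nonnegative. Note that convexity of $S$ is not actually invoked anywhere (the base point $(x_0,s_0,\lambda_0)\in S$ suffices for the optimality inequality), so it seems to be included only for cleanliness, ensuring that $\proj_x S$ and hence $T_L$ are convex.
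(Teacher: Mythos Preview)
Your proof is correct and follows essentially the same approach as the paper's: decompose $\wh{x}$ as a projected point plus an $L$-displacement, bound $q$ at the projection by $q(x^*)$ via optimality of $\ell$ on $S$, control the first-order remainder via the KKT identity and feasibility of $\wh{x}$, and the second-order remainder via positive semidefiniteness of $Q$ on $L$. The paper packages the middle two steps more abstractly---citing Proposition~\ref{pr:conv} for $q(x_0)\geq q(x^*)$ and invoking the stationarity inequality $\nabla q(y)^{\top}(x-y)\geq 0$ for KKT points rather than unpacking it---but the content is identical; your observation that the convexity hypothesis on $S$ is not actually used is also correct, since Proposition~\ref{pr:conv} already gives $q(x_0)\geq q(x^*)$ for any $x_0\in\proj_x S\subseteq\conv(\proj_x S)$.
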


\begin{proof}  Let $x \in T_L \cap P$ be arbitrary and let
$y \in \proj_x S$ be such that $y - x \in L$.  Since $S$
is convex, so is $\proj_x(S)$.  Thus $q(y) \geq q(x^*)$
by Proposition~\ref{pr:conv}.  Moreover, elements of
$\proj_x S$ are KKT points; hence 
$\nabla q(y)^{\top}( x - y ) \geq 0$.  Consequently,
\[ \begin{array}{lll}
q(x) - q(x^*) & = & [ \, q(x) - q(y) \, ] + 
[ \, q(y) - q(x^*) \, ] \\ [0.1in]
& \geq & \nabla q(y)^{\top} ( \, x - y \, ) + \thalf \, 
( \, x - y \, )^{\top} Q( \, x - y \,) \, \geq \, 0
\end{array} \]
by the convexity of $q(\bullet)$ on the subspace $L$.
\end{proof}

\begin{corollary} \label{co:psd KKY suff} \rm
If $Q$ is positive semidefinite, then every KKT point is a 
global minimizer of the QP (\ref{eq:forward QP}).
\end{corollary}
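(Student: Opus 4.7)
The plan is to apply Proposition~\ref{pr:conv-ext} with the most degenerate choices of $S$ and $L$, which is exactly what the remark preceding the corollary suggests. Given any KKT point $x^*$ of the QP~\eqref{eq:forward QP}, fix an associated slack-multiplier pair $( s^*,\lambda^* )$ so that $\mathbf{v}^* \triangleq ( x^*,s^*,\lambda^* )$ is a KKT triple, equivalently a feasible point of the LPCC~\eqref{eq:LPCC of QP}. Take $S \triangleq \{ \mathbf{v}^* \}$, which is a singleton, hence a convex subset of the feasible region of~\eqref{eq:LPCC of QP}. Trivially $\mathbf{v}^*$ is the (unique) minimizer of the linear function $\ell$ over $S$.

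Because $Q$ is positive semidefinite, the quadratic form $x^\top Q x$ is nonnegative on the whole space, so the admissible subspace $L$ in Proposition~\ref{pr:conv-ext} may be chosen as $L = \R^n$. With this choice $T_L = \proj_x(S) + L = \{ x^* \} + \R^n = \R^n$, so that $T_L \cap P$ collapses to $P$, the full feasible region of the QP. Proposition~\ref{pr:conv-ext} then delivers
\[
x^* \, \in \, \operatornamewithlimits{\mbox{\bf argmin}}_{x \in P} \, q(x),
\]
which is precisely the assertion that $x^*$ is a global minimizer of~\eqref{eq:forward QP}.

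There is no real obstacle here; the whole point of stating the corollary, as the text preceding it notes, is to showcase Proposition~\ref{pr:conv-ext} as a generalization of the classical sufficiency of KKT for convex QPs. The only things to verify are the two triviality checks above: the singleton $S$ is convex and contained in the LPCC feasible set (immediate), and $L = \R^n$ is a valid choice (immediate from positive semidefiniteness). Everything else is already packaged into Proposition~\ref{pr:conv-ext}.
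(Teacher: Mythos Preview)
Your proposal is correct and follows precisely the approach the paper takes: apply Proposition~\ref{pr:conv-ext} with $S$ equal to the singleton $\{\mathbf{v}^*\}$ and $L = \mathbb{R}^n$. The paper's own proof is a one-line reference to this same choice, and your write-up simply fills in the trivial verifications.
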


\begin{proof} It suffices to let the subset $S$ in 
Proposition~\ref{pr:conv-ext} be a single KKT triple and
$L = \mathbb{R}^n$. 
\end{proof}

\subsubsection{Aiming for local optimality for the forward QP} 
\label{subsec:local sol QP}

The discussion below aims to obtain a relation
between the partial KKT$(\mathbf{\bar{v}})$ and the QP
(\ref{eq:forward QP}) analogous to 
Proposition~\ref{pr:partial MILP and full}, 
by assuming that $\mathbf{\bar{v}}$
has a similar fixed-point property as in the proposition.
A full answer to this extended issue turns out to be not easy.   
In what follows, we provide a partial answer.  
Before undertaking the analysis, it would be useful to 
explain the motivation.  Indeed, it is long known that
the second-order necessary condition of optimality is also 
sufficient for a KKT vector to be a local minimizer
of a QP; see \cite[Chapter~3]{LeeTamYen05} for a comprehensive 
theory of quadratic programming.  Nevertheless,
this condition is in terms of a matrix-theoretic copositivity 
condition.  The analysis below aims to provide a sufficient
condition under which a KKT vector can be shown to be a local 
minimizer of the QP, by way of a relaxation of
the partial LPCC formulation of the KKT conditions in the 
spirit of (\ref{eq:partial LPCC}).  Since the solution 
of the resulting LPCC can be accomplished in the same way as 
(\ref{eq:partial LPCC}), which is the computational
workhorse of the PIP method to be described later; as such, the 
resulting LPCC condition is practically more viable
than the copositivity condition, albeit the former is only 
sufficient whereas the latter is necessary and sufficient.

\gap
 
Toward the goal stated above, we first present a simple 
observation whose converse provides the unresolved key 
to this issue.

\begin{proposition} \label{pr:locmin of QP implies} \rm
Let $\bar{x}$ be a locally optimal solution 
of (\ref{eq:forward QP}) and let $\bar{\lambda}$ be an
arbitrary optimal multiplier associated with $\bar{x}$.  
It then follows that the triple 
$\mathbf{\bar{v}} \triangleq 
( \bar{x},\bar{s},\bar{\lambda} )$, where 
$\bar{s} \triangleq D\bar{x} - d$,
is a locally optimal solution of the LPCC (\ref{eq:LPCC of QP}).
\end{proposition}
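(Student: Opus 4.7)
The plan is to reduce local optimality of the triple $\mathbf{\bar{v}}$ for the LPCC \eqref{eq:LPCC of QP} to the given local optimality of $\bar{x}$ for the forward QP \eqref{eq:forward QP} by exploiting the identity in \eqref{eq:equal obj}. Recall that for any triple $(x,s,\lambda)$ feasible to \eqref{eq:LPCC of QP} one has $\ell(x,\lambda) = 2q(x)$; this converts the linear objective of the LPCC into (twice) the quadratic objective of the QP, at the cost of a change of variables that is harmless since every LPCC-feasible $(x,s,\lambda)$ has $x$ primal-feasible for the QP (because $Dx = s + d \geq d$).

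With this in hand, I would proceed in three short steps. First, since $\bar{x}$ is locally optimal for \eqref{eq:forward QP}, pick a neighborhood $\mathcal{N}_x \subseteq \mathbb{R}^n$ of $\bar{x}$ on which $q(x) \geq q(\bar{x})$ for every $x$ satisfying $Dx \geq d$. Second, choose a neighborhood $\mathcal{N}$ of $\mathbf{\bar{v}} = (\bar{x},\bar{s},\bar{\lambda})$ in $\mathbb{R}^{n+2m}$ so small that its $x$-projection lies inside $\mathcal{N}_x$. Third, take any $(x,s,\lambda) \in \mathcal{N}$ that is feasible to \eqref{eq:LPCC of QP}; then $x \in \mathcal{N}_x$ and $Dx = s + d \geq d$, so $q(x) \geq q(\bar{x})$. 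Applying the identity \eqref{eq:equal obj} on both sides yields
\[
\ell(x,\lambda) \, = \, 2q(x) \, \geq \, 2q(\bar{x}) \, = \, \ell(\bar{x},\bar{\lambda}),
\]
which is exactly the local optimality of $\mathbf{\bar{v}}$ for \eqref{eq:LPCC of QP}.

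There is no serious obstacle in this argument. The one subtlety worth being explicit about is that local optimality of the LPCC must be asserted in the full $(x,s,\lambda)$-space, not just in the $x$-space; but this is taken care of automatically because for any LPCC-feasible triple near $\mathbf{\bar{v}}$, the slack $s$ is determined by $s = Dx - d$, so closeness of $(x,s,\lambda)$ to $\mathbf{\bar{v}}$ forces closeness of $x$ to $\bar{x}$, which is all that the QP local optimality needs. The role of $\bar{\lambda}$, meanwhile, is entirely absorbed by the identity \eqref{eq:equal obj}, which makes the LPCC objective depend only on $x$ on the feasible set.
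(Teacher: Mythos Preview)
Your proof is correct and follows essentially the same approach as the paper's: both arguments use the identity \eqref{eq:equal obj} to convert the LPCC objective into twice the QP objective on the LPCC feasible set, and then invoke the local optimality of $\bar{x}$ for the QP. Your version is slightly more explicit about lifting the neighborhood from $\mathbb{R}^n$ to $\mathbb{R}^{n+2m}$, but the substance is identical.
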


\begin{proof} Let ${\cal N}$ be a neighborhood of $\bar{x}$ 
within which $\bar{x}$ is optimal for (\ref{eq:forward QP}).  
For any $x \in {\cal N}$ such that the triple
$( x,s,\lambda )$, where $s \triangleq Dx - d$, is feasible to 
(\ref{eq:LPCC of QP}), by noting \eqref{eq:equal obj}, we have
\[
c^{\top}x + d^{\top} \lambda  \, = \, 2 \, ( \, c^{\top}x + 
\thalf x^{\top}Qx \, ) \, \geq \, 
2 \, ( \, c^{\top}\bar{x} + \thalf \bar{x}^{\top}Q\bar{x} \, )
\, = \, c^{\top} \bar{x} + d^{\top} \bar{\lambda},
\]
proving the desired (local) optimality of $\mathbf{\bar{v}}$
for (\ref{eq:LPCC of QP}). 
\end{proof}

In the spirit of Proposition~\ref{pr:locmin of QP implies},
we assume that the triple $\mathbf{\bar{v}}$ is a globally 
optimal solution of the restricted-KKT$(\mathbf{\bar{v}})$.
Toward the goal of showing that $\bar{x}$ is a local 
minimizer of the QP (\ref{eq:forward QP}), we rely on
the following relaxation of (\ref{eq:restricted KKT}) 
pertaining to the multipliers $\lambda_i$:
\begin{equation} \label{eq:restricted KKT relaxed II}
\mbox{\begin{tabular}{l}
relaxation of \\
restricted KKT$(\mathbf{\bar{v}})$:
\end{tabular}} \left\{ \begin{array}{ll}
\displaystyle{
\operatornamewithlimits{\mbox{\bf minimize}}_{
(x,s,\lambda) \in \mathbb{R}^{n + 2m}}
} & c^{\top}x + d^{\top} \lambda \\ [0.1in]
\mbox{\bf subject to} & Qx - D^{\top} \lambda \, = \, -c 
\\ [0.1in]
& Dx - s \, = \, d \\ [0.1in]
& 0 \, \leq \, s_i \, \perp \, \lambda_i \, \geq \, 0, 
\epc i \, \in \, {\cal M}_c(\mathbf{\bar{v}}) \\ [0.1in]
& 0 \, = \, s_i; \ \lambda_i \mbox{ free}, \hspace{0.3in} 
i \, \in \, {\cal M}_{\lambda}^+(\mathbf{\bar{v}}) \\ [0.1in]
\mbox{\bf and} & ( \, \lambda_i, \, s_i \, ) \, \geq \, 0, 
\hspace{0.5in} i \, \in \, {\cal M}_s^+(\mathbf{\bar{v}}).
\end{array} \right.
\end{equation}
We have the following result whose assumption is
stated in terms of the global solution property of the triple 
$\mathbf{\bar{v}}$ on hand.

\begin{proposition} \label{pr:sufficient for QP local min} \rm
Suppose that the QP (\ref{eq:forward QP}) has an optimal 
solution.  Let a feasible triple 
$\mathbf{\bar{v}} \triangleq ( \bar{x},\bar{s},\bar{\lambda} )$
of (\ref{eq:LPCC of QP}) be optimal for 
(\ref{eq:restricted KKT relaxed II}).  Then $\bar{x}$ is 
a locally optimal solution for the QP (\ref{eq:forward QP}).
\end{proposition}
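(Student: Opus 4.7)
The plan is to combine the algebraic identity $\ell(x,\lambda) = 2q(x) - s^{\top}\lambda$ derived in (\ref{eq:equal obj}) (which is valid for any $(x,s,\lambda)$ obeying the two equality constraints of (\ref{eq:LPCC of QP})) with the assumed global optimality of $\mathbf{\bar v}$ on the relaxation (\ref{eq:restricted KKT relaxed II}).

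First I would establish a base inequality: for any $(x,s,\lambda)$ feasible to (\ref{eq:restricted KKT relaxed II}), one has $s^{\top}\lambda \geq 0$ because the product $s_i\lambda_i$ vanishes on ${\cal M}_c(\mathbf{\bar v})$ by complementarity, vanishes on ${\cal M}_\lambda^+(\mathbf{\bar v})$ because $s_i = 0$, and is a sum of products of nonnegative scalars on ${\cal M}_s^+(\mathbf{\bar v})$. Since $\bar s^{\top}\bar\lambda = 0$ by LPCC feasibility of $\mathbf{\bar v}$, the identity at $\mathbf{\bar v}$ reads $\ell(\mathbf{\bar v}) = 2q(\bar x)$, and the assumed global optimality of $\mathbf{\bar v}$ on the relaxation then delivers
\[
q(x) \,\geq\, q(\bar x) + \tfrac{1}{2}\, s^{\top}\lambda \,\geq\, q(\bar x)
\]
for every $x$ that appears as the first coordinate of a relaxation-feasible triple.

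Next I would take an arbitrary $x$ near $\bar x$ with $Dx \geq d$, set $h := x - \bar x$, and use $\nabla q(\bar x) = D^{\top}\bar\lambda$ (from the LPCC equality constraint) together with $\bar s^{\top}\bar\lambda = 0$ to obtain the exact expansion
\[
q(x) - q(\bar x) \,=\, \bar\lambda^{\top}(Dx - d) + \tfrac{1}{2}\, h^{\top}Qh,
\]
in which each term $\bar\lambda_i(Dx-d)_i$ is nonnegative. Letting $\tilde D$ denote the submatrix with rows $D_i,\ i\in{\cal M}_\lambda^+(\mathbf{\bar v})$, I would decompose $h = h_F + h_N$ with $h_F \in \ker \tilde D$ and $h_N \in \operatorname{range}\tilde D^{\top}$, bound the linear term from below by $c_1 \|h_N\|$ (using strict positivity of the active components of $\bar\lambda$ and the equivalence $\|h_N\| \asymp \|\tilde D h\|$), and bound the Hessian term from below by $\tfrac{1}{2} h_F^{\top}Qh_F - O(\|h_N\|\,\|h\|)$. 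Provided the tangential Hessian satisfies $h_F^{\top}Qh_F \geq 0$ for the relevant $h_F \in \ker \tilde D$, the combined estimate $q(x) - q(\bar x) \geq \|h_N\|\bigl(c_1 - O(\|h\|)\bigr)$ makes the conclusion immediate for $\|h\|$ sufficiently small.

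The principal remaining obstacle is to extract the tangential second-order condition from the assumed relaxation optimality. The idea is: for each such $h_F$, set $\tilde x := \bar x + h_F$ on the face $\{D_i u = d_i,\ i \in {\cal M}_\lambda^+(\mathbf{\bar v})\}$ and seek a relaxation-feasible multiplier $\tilde\lambda = \bar\lambda + \delta$ with $D^{\top}\delta = Qh_F$ together with the required sign and complementarity conditions; then the base inequality applied at the resulting triple, combined with the vanishing of $\bar\lambda^{\top}(D\tilde x - d)$ on the face, yields $h_F^{\top}Qh_F \geq 0$. Strict positivity of $\bar\lambda$ on ${\cal M}_\lambda^+$ and of $\bar s$ on ${\cal M}_s^+$ makes the sign constraints inactive for small $\delta$, so the real content is solvability of the linear system $D^{\top}\delta = Qh_F$ with $\delta_i = 0$ on those $i\in{\cal M}_c(\mathbf{\bar v})$ at which $(Dh_F)_i > 0$ (to preserve complementarity). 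I expect this step to follow from a Farkas-type alternative, invoking the standing hypothesis that the QP (\ref{eq:forward QP}) admits an optimal solution to ensure boundedness of the relaxation and nontriviality of the associated dual cone; once $\tilde\lambda$ is produced, the decomposition estimate above closes the proof.
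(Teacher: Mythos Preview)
Your base inequality and the quadratic expansion are correct, but the proof has a genuine gap precisely where you flag it: the construction of $\tilde\lambda$ satisfying $D^{\top}\delta = Qh_F$ together with the sign and complementarity constraints. This linear system need not be solvable at all (nothing forces $Qh_F \in \operatorname{range} D^{\top}$), and the additional combinatorial constraints you impose on~$\delta$ make it worse. Appealing to ``a Farkas-type alternative'' and to solvability of the QP is not an argument; there is no evident duality relation that produces the required $\delta$ for an \emph{arbitrary} small $h_F\in\ker\tilde D$. A second, smaller issue: your decomposition estimate would require $h_F^{\top}Qh_F\ge 0$ for every $h_F$ arising as the $\ker\tilde D$-projection of a feasible direction, which is strictly more than the second-order condition on the critical cone; you would need to refine the estimate to isolate only critical-cone directions.

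The paper's proof avoids the multiplier construction entirely by a short two-step argument. Step~1: the \emph{restricted QP} obtained by turning the constraints indexed by ${\cal M}_\lambda^+(\mathbf{\bar v})$ into equalities is solvable (its feasible set is a face of the original), and the KKT triple of its optimal solution $\wh{x}$ is automatically feasible to the relaxation (\ref{eq:restricted KKT relaxed II}) --- this is exactly why the multipliers on ${\cal M}_\lambda^+$ were freed. Your base inequality then gives $q(\wh{x})\ge q(\bar x)$, so $\bar x$ is a \emph{global} minimizer of the restricted QP. Step~2: the critical cone of the restricted QP at $\bar x$ equals that of the original QP, because $\nabla q(\bar x)^{\top}v=0$ together with $\bar\lambda_j>0$ for $j\in{\cal M}_\lambda^+$ forces $D_{j\bullet}v=0$ there. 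Hence the second-order necessary condition from Step~1 is the second-order sufficient condition for the original QP. The point you are missing is that one does not need a multiplier for every test direction $h_F$; one needs a multiplier only at the \emph{single} optimal point $\wh{x}$ of the restricted QP, and that multiplier exists by KKT theory.
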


\begin{proof}  The proof is divided into two steps.  First, 
we show that $\bar{x}$ is a globally optimal solution of
the restricted QP:
\begin{equation} \label{eq:restricted QP}
\begin{array}{ll}
\displaystyle{
\operatornamewithlimits{\mbox{\bf minimize}}_{
x \in \mathbb{R}^n}
} & c^{\top}x + \thalf \, x^{\top}Qx \\ [0.1in]
\mbox{\bf subject to} & ( Dx \, \geq \, d )_j, \epc 
j \, \not\in \, {\cal M}_{\lambda}^+(\mathbf{\bar{v}}) 
\\ [0.1in]
\mbox{\bf and} & ( Dx \, = \, d )_j, \epc 
j \, \in \, {\cal M}_{\lambda}^+(\mathbf{\bar{v}}),
\end{array} \end{equation}
which must have an optimal solution $\wh{x}$ because 
its feasible set is a subset of that of the solvable 
QP (\ref{eq:forward QP}).
Together with a multiplier $\wh{\lambda}$ and slack 
$\wh{s} \triangleq D\wh{x} - d$, the triple 
$( \wh{x},\wh{s},\wh{\lambda} )$
is feasible to (\ref{eq:restricted KKT relaxed II}),  Hence 
we have 
\[
c^{\top} \wh{x} + \thalf \, \wh{x}^{\, \top}Q \wh{x} 
\, = \, 
\thalf [ \, c^{\top} \wh{x} + d^{\top} \wh{\lambda} \, ]
\, \geq \, 
\thalf [ \, c^{\top} \bar{x} + d^{\top} \bar{\lambda} \, ]
\, = \, c^{\top} \bar{x} + 
\thalf \, \bar{x}^{\top}Q \bar{x}, 
\]
where the last equality comes from \eqref{eq:equal obj}.
The next step is to show that the second-order necessary
condition of the restricted QP (\ref{eq:restricted QP}) is
the same as that of the original (\ref{eq:forward QP}) 
at $\bar{x}$; in turn, it suffices to show that these two 
problems have the same critical cones at $\bar{x}$; 
that is, 
if ${\cal F}_{\rm qp}$ and ${\cal F}_{\rm rst}$ denote the 
two feasible sets respectively, then
\[
{\cal T}(\bar{x};{\cal F}_{\rm qp}) \, \cap \, 
\nabla q(\bar{x})^{\perp} \, = \,
{\cal T}(\bar{x};{\cal F}_{\rm rst}) \, \cap \, 
\nabla q(\bar{x})^{\perp},
\]
where ${\cal T}(\bar{x};{\cal F})$ is the tangent cone of 
a set ${\cal F}$ at $\bar{x} \in {\cal F}$.  
Since the right-hand cone is obviously a subcone of the 
left-hand cone, it suffices to show one inclusion:
\[
{\cal T}(\bar{x};{\cal F}_{\rm qp}) \, \cap \, 
\nabla q(\bar{x})^{\perp} \, \subseteq \,
{\cal T}(\bar{x};{\cal F}_{\rm rst}) \, \cap \, 
\nabla q(\bar{x})^{\perp}.
\]
Let $v$ belongs to the left-hand cone.  We then have
$D_{j \bullet} v \geq 0$ for all $j \in {\cal A}(\bar{x})$, 
which is the index set of active constraints at $\bar{x}$, 
i.e., such that $D_{j \bullet}\bar{x} = d_j$.   To show 
that $v$ belongs to the right-hand cone, it suffices to 
show that $D_{j \bullet} v = 0$ for all 
$j \in {\cal M}_{\lambda}^+(\mathbf{\bar{v}})$.  We have
\[
\nabla q(\bar{x}) \, = \, c + Q\bar{x} \, = \, 
\displaystyle{
\sum_{j \in {\cal A}(\bar{x})}
} \, \bar{\lambda}_j \, ( \, D_{j \bullet} \, )^{\top}.
\]
Thus $v^{\top}\nabla q(\bar{x}) = 0$ implies that  
$D_{j \bullet}^{\top}v = 0$ if $\bar{\lambda}_j > 0$.
Since all indices $j \in {\cal M}_{\lambda}^+
(\mathbf{\bar{v}})$ satisfies that latter condition, 
the equality of
the two critical cones therefore follows readily.
\end{proof} 

\section{The PIP Method for the LPCC} \label{sec:PIP}

The PIP method solves the problem (\ref{eq:LPCC}) under 
two basic
assumptions: (1) the problem has an optimal solution, and 
(2) the condition (\ref{eq:bound M}) holds for some scalar 
$\overline{M}$.  It further assumes that a feasible
triple $\mathbf{\bar{v}}$ of (\ref{eq:LPCC}) is available 
on hand.  The general idea of PIP is very simple and its 
steps are quite flexible, which we summarize and present 
a pseudo code for further details.  The main strategy is 
the control of the three index sets: 
${\cal M}_c(\mathbf{\bar{v}})$,
${\cal M}_y^+(\mathbf{\bar{v}})$ and 
${\cal M}_z^+(\mathbf{\bar{v}})$ in the 
reduced-LPCC$(\mathbf{\bar{v}})$
for a given triple $\mathbf{\bar{v}}$.  
This is done by   
fixing a scalar $p_{\max} \in ( 0,1 )$ so that 
$p_{\min} \triangleq 1-p_{\textrm{max}}$ 
lower-bounds the percentage of positive elements in 
$(\bar{y}, \bar{z})$ at each iteration.  Specifically,
an initial scalar $p \in ( p_{\min}, \, 1 )$ is used 
(e.g.\ $p = 0.8$ initially) and updated to control the size 
of ${\cal M}_y^+(\mathbf{\bar{v}})$ and 
${\cal M}_z^+(\mathbf{\bar{v}})$, so that in addition to satisfying (\ref{eq:index sets condition}),
\begin{equation} \label{eq:index sets in algorithm}
| \, {\cal M}_y^+(\mathbf{\bar{v}}) \, | \, = \, 
\lfloor \, p \, \times \mid 
\{ \, i \, : \, \bar{y}_i > 0 \, \} \, | \, \rfloor 
\ \mbox{ and} \
| \, {\cal M}_z^+(\mathbf{\bar{v}}) \, | \, = \,
\lfloor \, p \, \times \mid 
\{ \, i \, : \, \bar{z}_i > 0 \, \}\, | \, \rfloor,
\end{equation}
where $\lfloor \, \bullet \, \rfloor$ denotes the floor 
function. 

The percentage $p$ is kept unchanged in the next iteration 
if the minimum objective value of the 
reduced-LPCC$(\mathbf{\bar{v}})$, obtained 
by solving the
reduced-MILP$(\mathbf{\bar{v}})$, is less
than the value at the current $\mathbf{\bar{v}}$; this is
the {\bf improvement case}.  Otherwise $p$ is decreased by 
a fixed scalar $\alpha \in ( 0, \, 1)$;
thus the number of integer variables to the determined at 
the next iteration increases; this is {\bf no improvement
case}. Furthermore, we fix a priori a positive integer 
$r_{\max}$ (say 3 or 4) to be 
the maximum number of iterations $p$ is kept unchanged; when
this number of times is reached, $p$
is decreased (by $\alpha$) in the next iteration.
The algorithm terminates if the maximum percentage 
$p_{\max}$ of un-fixed integer variables is reached and 
there is no improvement in the 
minimum objective value of  (\ref{eq:partial LPCC}) or 
if $p$ is unchanged and equal to $p_{\min}$ for $r_{\max}$ 
iterations. 

\begin{algorithm}
\caption{The PIP$(p_{\max})$ method for LPCC}\label{alg:PIP}
\begin{algorithmic}[1] 
\State{\bf Initialization:} Let $p_{\max} \in (0, 1)$, 
$r_{\max}$ be a positive integer, 
and $\alpha \in ( 0, 1 )$ be fixed.  
\State Let an initial feasible triple 
$\mathbf{\bar{v}} \triangleq ( \bar{x},\bar{y},\bar{z} )$ 
of (\ref{eq:LPCC}) be given.  Let 
$\bar{\theta} \triangleq 
c^{\top}\bar{x} + e^{\top}\bar{y} + f^{\top} \bar{z}$.  
\State Let $p_{\min} \triangleq 1 - p_{\max}$.
Let $p \in ( p_{\min}, \, 1 )$ be an initial 
percentage.  Let $r = 1$ initially.
\While{the stopping criterion ($p < p_{\min}$) is not met}
\State  Let the pair ${\cal M}_y^+(\mathbf{\bar{v}})$ and 
${\cal M}_z^+(\mathbf{\bar{v}})$ satisfy
(\ref{eq:index sets condition}) and 
(\ref{eq:index sets in algorithm}) and let 
${\cal M}_c(\mathbf{\bar{v}})$
be the complement of these two index sets.  
\State Solve the 
reduced-MILP$(\mathbf{\bar{v}})$ and 
let $\mathbf{\wh{x}} \triangleq ( \wh{x},\wh{y},\wh{z},\wh{w} )$
be an optimal solution.  
 \State Set $\mathbf{\bar{v}} \leftarrow 
 ( \wh{x},\wh{y},\wh{z} )$.
\State Let $\wh{\theta} \triangleq 
c^{\top}\wh{x} + e^{\top}\wh{y} + f^{\top} \wh{z}$.
\If{$\wh{\theta} = \bar{\theta}$ \textbf{OR}  
$r \ge  r_{\max}$ }
\State $r \leftarrow 1 $
\State $p\leftarrow p - \alpha$ 
\Else
            \State $r \leftarrow r+1$
        \EndIf
 
    \EndWhile
    \State \textbf{return} $\mathbf{\bar{v}}$. 
\end{algorithmic}
\end{algorithm}

\begin{proposition} \label{pr:termination of PIP} \rm
The PIP algorithm terminates in finitely many steps. 
At termination, the output $\mathbf{\bar{v}}$ must be a 
local minimum of the LPCC (\ref{eq:LPCC}).  
\end{proposition}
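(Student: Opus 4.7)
The plan is to verify both claims of the proposition --- finite termination and local minimality of the returned $\mathbf{\bar v}$ --- in sequence.

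First, I would establish finite termination by bookkeeping the two counters $p$ and $r$. The counter $r$ lies in $\{1, \ldots, r_{\max}\}$ and is reset to $1$ precisely when $p$ is decremented by $\alpha$; hence at most $r_{\max}$ iterations elapse between any two consecutive decrements of $p$. The scalar $p$ starts strictly below $1$, decreases by the fixed $\alpha > 0$ at each decrement, and triggers the loop's exit as soon as $p < 1 - p_{\max}$; therefore only finitely many decrements can occur before termination. Multiplying the two bounds yields the desired finite upper bound on the total number of iterations.

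Second, for the local minimality claim, my plan is to invoke Proposition~\ref{pr:partial MILP and full}. At termination, $\mathbf{\bar v}$ is returned as the optimal solution of the partial-MILP solved at the last iteration; by the MILP--LPCC equivalence guaranteed by the bound $\overline{M}$, it is globally, and hence locally, optimal for the corresponding partial-LPCC. Proposition~\ref{pr:partial MILP and full} then lifts this local optimality to the full LPCC~(\ref{eq:LPCC}), provided the associated index sets satisfy (\ref{eq:index sets condition}) at $\mathbf{\bar v}$ itself.

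The main technical subtlety lies in this last proviso. The sets ${\cal M}_y^+, {\cal M}_z^+$ used in the final partial-MILP are formed in line~5 from the iterate entering the last iteration, not from the updated $\mathbf{\bar v}$; they may fail (\ref{eq:index sets condition}) at $\mathbf{\bar v}$ if the MILP's optimal solution has strictly smaller support than the indices enforced as positive. I would address this either by appealing to the termination trigger $\wh v = \bar v$, under which the incoming iterate is already optimal for its own partial-MILP (so its natural index sets satisfy (\ref{eq:index sets condition}) and Proposition~\ref{pr:partial MILP and full} applies directly), or by refining the partition at $\mathbf{\bar v}$ to the intersections ${\cal M}_y^+ \cap \{i : \bar y_i > 0\}$ and ${\cal M}_z^+ \cap \{i : \bar z_i > 0\}$ and verifying via a local perturbation argument that $\mathbf{\bar v}$ remains locally optimal for the refined partial-LPCC$(\mathbf{\bar v})$. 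This reconciliation of index sets is the main place where care is required.
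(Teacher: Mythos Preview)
Your finite-termination argument is correct and is exactly the (implicit) reasoning the paper relies on; the paper states the proposition with a $\Box$ and gives no formal proof beyond the paragraph preceding Algorithm~\ref{alg:PIP}, so you are already more careful than the authors here.

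For local optimality, the paper's entire argument is the single sentence ``the iterate $\mathbf{\bar{v}}$ must be an optimal solution of the partial-LPCC$(\mathbf{\bar{v}})$, hence a local minimizer of the LPCC by Proposition~\ref{pr:partial MILP and full}.'' You take the same route but correctly flag a subtlety the paper glosses over: the index sets in the final partial-MILP are built from the \emph{incoming} iterate, so condition~(\ref{eq:index sets condition}) need not hold at the \emph{returned} $\mathbf{\bar v}$, and Proposition~\ref{pr:partial MILP and full} cannot be invoked as written.

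Your two proposed fixes, however, do not fully close this gap. For the first, the condition $\wh v=\bar v$ certifies that the \emph{incoming} iterate is optimal for its own partial-LPCC (so Proposition~\ref{pr:partial MILP and full} applies to it), but line~7 of the algorithm unconditionally overwrites $\mathbf{\bar v}$ with $(\wh x,\wh y,\wh z)$, which may differ and need not be a local minimizer of~(\ref{eq:LPCC}); moreover, the final decrement of $p$ can be triggered by $r\ge r_{\max}$ with $\wh v<\bar v$, a case your first option does not cover. For the second, refining to ${\cal M}_y^+\cap\{i:\bar y_i>0\}$ enlarges ${\cal M}_c$, and the refined partial-LPCC can have a strictly larger feasible set near $\mathbf{\bar v}$ than the old one (an index $i\in{\cal M}_y^+$ with $\bar y_i=\bar z_i=0$ now admits nearby points with $z_i>0$). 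Global optimality for the old partial-LPCC says nothing about such directions, so ``a local perturbation argument'' is not merely a routine check---one can construct examples where the returned $\mathbf{\bar v}$ fails to be a local minimizer of the refined problem. In short, you have identified a genuine gap that is present in the paper's own treatment; a clean repair would require either modifying the algorithm (e.g., retain the incoming iterate when $\wh v=\bar v$, and force a no-improvement step before exit) or adding an assumption on which optimal solution the MILP solver returns.
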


\begin{proof}
At each iteration, if no improvement is observed, the percentage $p$ is decreased, leading to a different reduced subproblem being solved in the next iteration. Since both the objective value and $p$ decrease monotonically throughout the algorithm, no reduced subproblem can be solved more than twice. Given that the number of reduced subproblems is finite, the algorithm must terminate in a finite number of steps.
At the termination of the algorithm, 
the iterate $\mathbf{\bar{v}}$ must be an optimal 
solution of the 
reduced-LPCC$(\mathbf{\bar{v}})$, 
hence a local minimizer of the LPCC (\ref{eq:LPCC})
by Proposition~\ref{pr:partial MILP and full}.  
\end{proof}

When the above PIP algorithm is applied to the QP-derived 
LPCC (\ref{eq:LPCC of QP}), the subproblems are the
restricted KKT problems (\ref{eq:restricted KKT}). 
Let $S^k$ be the feasible region of the $k$-th 
restricted QP-derived LPCC  and 
$\mathbf{v}^k = (x^k,s^k,\lambda^k)$ be the value 
of $\wh{\mathbf{v}}$ at iteration $k$ in the PIP 
algorithm. Then 
$\mathbf{v}^k \in \argmin\limits_{(x,s,\lambda)\in \mathbf{S}^k}
c^{\top} x + d^{\top}\lambda$, where 
$\mathbf{S}^k \triangleq \displaystyle{
\bigcup_{i=1}^k
} \, S^i$. It follows from Proposition~\ref{pr:conv} that 
$x^k$ is the optimal solution of the QP restricted to 
$\conv( \proj_x(\mathbf{S}^k) )$.  

\gap

\noindent {\bf Example~\ref{ex:a counterex} continued.}
With
the same KKT triple $\mathbf{\bar{v}} = ( \thalf,0,0 )$,
consider the restricted-KKT$(\mathbf{\bar{v}})$ 
given by (\ref{eq:restricted KKT}) by fixing $\lambda = 0$:
\[ \begin{array}{ll}
\displaystyle{
\operatornamewithlimits{\mbox{\bf minimize}}_{x,\lambda,\mu}
} & \thalf \, x - \lambda \\ [0.1in]
\mbox{\bf subject to} & 0 \, = \, \thalf - x + \lambda - \mu 
\\ [0.1in]
& 0 \, \leq \, x \, \perp \, \mu \, \geq \, 0 \\ [0.1in]
\mbox{\bf and } & 0 \, \leq \, 1 - x, \ \lambda \, = \, 0.
\end{array} 
\]
The global minimizer of the latter LPCC is 
$\left( \, 0,0,\thalf \, \right)$,
recovering the global minimizer $x = 0$ of the QP.  Similarly,
by fixing $\mu = 0$, we obtain the other restricted 
KKT$(\mathbf{\bar{v}})$:
\[ \begin{array}{ll}
\displaystyle{
\operatornamewithlimits{\mbox{\bf minimize}}_{x,\lambda,\mu}
} & \thalf \, x - \lambda \\ [0.1in]
\mbox{\bf subject to} & 0 \, = \, \thalf - x + \lambda - \mu 
\\ [0.1in]
& 0 \, \leq \, x, \ \mu \, = \, 0 \\ [0.1in]
\mbox{\bf and } & 0 \, \leq \, 1 - x \, \perp \,  
\lambda \, \geq \, 0,
\end{array} 
\]
whose global minimum is $\left( 1,\thalf,0 \, \right)$, which
yields the other global minimizer $x = 1$ of the original QP.
Since these two restricted LPCCs are candidate subproblems
to be selected by the PIP algorithm when initiated at
the KKT triple $\mathbf{\bar{v}}$, we deduce the following
positive effect of the algorithm:

\gap

\noindent $\bullet $ starting at a KKT solution that is
not a local minimum of a QP, the PIP algorithm is capable
of computing a local minimum of the QP.  \hfill $\Box$

\section{Computational Experiments}

This section presents the numerical results of a carefully 
conducted set of computational experiments to evaluate the 
performance of PIP.  To be described in 
Subsection \ref{sec: instances},
several classes of problems are employed in the tests;
these are (a) standard quadratic program, abbreviated as
StQP, (b) quadratic assignment problem, abbreviated as
QAP, (c) inverse quadratic problem, abbreviated as InQP, 
and (d) inverse affine variational inequality, 
abbreviated as InvAVI.  
We have not tested a {\sl general} LPCC 
because we feel that it would be more meaningful to apply
the PIP method
to some structured instances of the LPCC with potential
applications.  We begin in the next subsection 
with some implementation details of 
Algorithm~\ref{alg:PIP}, this is followed by the 
description of the problem instances, and then the results. 
Our experiments show that (1) PIP can improve
considerably the solutions of QPs and LPCCs obtained by an 
off-the-shelf nonlinear programming solver (see below); 
and (2) PIP can improve the solution of the full 
MILP formulation (\ref{eq:FIP of LPCC}) defined by a complete 
set of integer variables, with and without warm start 
(which we denote by FMIP-W and FMIP, 
respectively) in terms of the computation time and solution
quality. The latter improvement is confirmed when the solution
of the full MILP (a) requires excessive time, and (b) is
terminated in roughly the same amount of time required by PIP
with solution quality often inferior to that by PIP itself.

\subsection{Implementation details}
\label{sec: implementation}

Some details in the implementation PIP($p_{\max}$) method 
(Algorithm \ref{alg:PIP}) are as follows.

\gap

\noindent {\bf Parameters:} We set the 
maximum proportion of integer variables to be 
$p_{\max} \in (0, 1)$ with the initial proportion to 
be $0.8$, the maximum number of iterations that 
$p$ is allowed to remain unchanged to be  $r_{\max} = 3$, 
and the decrease for $p$ to be $\alpha = 0.1$. 

\gap

\noindent {\bf NLP solvers for initial solutions:}  
The nonlinear programming solver to 
compute the initial solutions of PIP (and FMIP-W) can 
be changed freely based on the users' preference.  In 
fact, one of the main goals of PIP is to improve the 
suboptimal QP or LPCC solutions 
obtained through such a local search procedure. 
While it is observed in our experiments that PIP can 
improve from any feasible solution that is not globally 
optimal and has shown little sensitivity to the quality 
of the initial solutions, an 
initial solution with competitive quality is indeed usually 
preferred in practice. Hence, we first conduct a preliminary
comparison of three popular NLP solvers: 
IPOPT \cite{ipopt}, filterSQP 
\cite{filtersqp}, and Knitro \cite{knitro} on the 
NEOS server 
\cite{neos1, neos2, neos3} in solution quality for a set 
of random StQP instances of varying problem 
size (200, 500, 1000) 
and density (0.5, 0.75) (see explanation in 
Subsection~\ref{sec: instances});
we observe that the solution quality of these three 
solvers is quite similar. 
Hence, we use the free open-source IPOPT (version 3.12.13)
\cite{ipoptcode} as our NLP solver 
to obtain an initial solution for the tested problems.

\gap

A numerical issue may arise when the NLP solver fails to produce an LPCC feasible solution for PIP initialization within a reasonable time, which is the case for large InvQP and QAP instances. In such a scenario, there are three alternative approaches to start PIP. The first approach is to obtain the initial LPCC feasible solution via
other feasible solution methods, such as an (early terminated) incumbent FMIP solution obtained from MILP solver, which forces the PIP solution path to align with FMIP in the initialization phase. This is the approach we adopt in our experiments for large inverse QP instances, where PIP runs are initialized by feasible solutions obtained from solving the full MILP reformulation of LPCCs using the 
GUROBI solver under a specified time limit (10 minutes). The second approach is to warm start the NLP solver with a known feasible solution that can be constructed with domain knowledge and is particularly applicable to specially structured problems. The third approach 
 is to abandon the infeasible initial solution for the first warm start but still use it to construct the first reduced MILP by partially fixing binary variables in the same spirit of \eqref{eq:index sets in algorithm}.
The advantage of the third approach is four-fold: (1) No need for extra pipeline or domain knowledge; (2) The LPCC feasibility will be regained automatically right after solving the first sub-problem; (3) Even without warm start, the first reduced-MILP does not demand high computational power since most integer variables are fixed in early stage of PIP; (4) The information contained in the initial solution (extracted by NLP techniques) is not wasted, as it is utilized to construct the first sub-problem. 
We adopt the third approach to initialize PIP on QAP instances and use FMIP as benchmarks rather than FMIP-W.

\gap

\noindent {\bf The MILP solver:}  In our
experiments, we use
GUROBI (version 11.0.1) \cite{gurobi} to solve all the
integer programs: the full MIP and the reduced ones.
This is one of the most advanced, publicly 
available general integer programming 
solvers.  Default values in the solver are used.  Most
importantly, in the PIP iterations, the ``warmstart = true'' 
option in employed in GUROBI so that
a current iterate $\mathbf{\bar{v}}$ can be profitably 
employed to initialize the solution of the next reduced-MILP.

\gap

\noindent {\bf Construction and update of index sets:} 
With an iterate 
$\mathbf{\bar{v}}=(\bar{x}, \bar{y}, \bar{z})$ available, 
the index sets ${\cal M}_y^+(\mathbf{\bar{v}})$ and 
${\cal M}_z^+(\mathbf{\bar{v}})$, to be updated in every 
iteration, are constructed to include the indices of the 
largest $p \times 100 \% $ members of 
$\{\, \bar{y}_i \, \mid \bar{y}_i > 0 \,\}$ and the indices 
of the largest $p \times 100 \% $ members of 
$\{\, \bar{z}_i \, \mid \bar{z}_i > 0 \,\}$ correspondingly, 
determining in particular the integer variables to be fixed 
in the reduced-MILP($\mathbf{\bar{v}}$) for the
LPCC and similarly in the restricted-KKT systems
(\ref{eq:MILP restricted KKT}). 

With a new solution obtained from solving the current partial 
MILP, we compare the objective value of the new solution 
against the previous one.  We maintain the current value of $p$ 
(the proportion of integer variables to fix) if a decrease
in objective value is observed and $p$ has not been kept 
unchanged for $r_{\max}$ iterations. Otherwise, we 
decrease $p$ by the specified amount $\alpha = 0.1$. Note that, 
similar to the time limit for solving each sub-problem
(see below), the maximum allowed repeats $r_{\max}$ and the 
expanding step size $\alpha$ are hyperparameters that can be 
tuned as well. A higher $r_{\max}$ value encourages more 
exploitation but less exploration, while a larger $\alpha$ leads 
to the opposite effect. Hence, if each 
sub-problem can be solved relatively fast, PIP computation can 
benefit from a higher $r_{\max}$ and smaller $\alpha$ to achieve 
faster improvement of solutions through strengthened 
exploitation. With the proportion $p$ updated and a new LPCC 
feasible solution obtained, we reconstruct the fixing index 
sets and solve the new partial MILP (with warm start) from 
the updated solution. This process is repeated until the maximum 
proportion of integer variables to solve is reached, 
i.e., when $p < p_{\min}$; when the latter occurs, we
terminate return the final solution.

\gap

\noindent {\bf Time limits, early termination and inexact solutions:}
When the computational budget is limited, it is important to allocate resources effectively across all subproblems, which can be done by carefully adjusting GUROBI's time-limit parameter. The PIP method benefits from balancing exploration--solving more partial MILPs with varying sizes and different fixed variables-- and exploitation, which involves dedicating more time to seek a favorable solution to a single subproblem. Increasing the computation time for each subproblem promotes deeper exploitation, while reducing it allows for more subproblems to be solved, enhancing exploration. 
In our experiments, based on 
observations from some preliminary
runs, we set the time limit for solving 
each reduced-MILP 
(\ref{eq:partial MILP}) in the PIP 
procedure to be 10 minutes, except for
those (\ref{eq:MILP restricted KKT}) in the
QAPs with $n < 30$ for which
we set the time limit to be 1 minute.  Two
consequences are worth mentioning.  One: 
in spite of the early termination of the
PIP subproblems (solved by GUROBI), all
generated iterates are feasible to the 
LPCC (\ref{eq:LPCC}); this is 
because (i) an initial iterate for PIP can be 
generated by the procedure described above, 
and (ii) GUROBI always outputs a
feasible IP solution at (early or natural) 
termination when there is one. Thus, the 
objective value at termination of PIP is 
always an upper bound of the optimal objective 
value of the LPCC being solved.  Two: there is
a possibility that at PIP termination 
Proposition~\ref{pr:termination of PIP}
may not be applicable when the last subproblem
is early terminated without obtaining
an optimal solution.  In these cases,
the quality of the computed solutions
at termination of PIP is assessed by other
means; see the summary (Table~\ref{tab:stqp_summary}
in particular) the end of the next subsection
for the StQP.  Computationally the solutions obtained by the solvers are subject to numerical tolerance. In particular, the computed solutions may be slightly infeasible to the subproblems. However, such inprecision does not seem to affect the success of PIP.  A summary of all the numerical results
is presented in Subsection~\ref{subsec:summary}.

\gap

\noindent {\bf Computing platform:}
The experiments for the StQP, inverse QP, and inverse AVI instances are 
conducted locally on a MacBook Pro 2020 laptop with an 
Apple M1 3.2 GHz processor and 16 GB of memory. For the 
more computationally demanding QAP instances, a Windows desktop 
with a Ryzen 7 5800X 3.8 GHz processor and 16 GB of memory is 
utilized.  All computations are coded in Python 3.12 with Pyomo,
Numpy, and Scipy packages.

\subsection{Problem instances} \label{sec: instances}

In what follows, we detail the four categories of 
problem instances employed in the experiments, along with
the upper bounds for the complementary variables in the 
resulting LPCCs.

\gap

\noindent $\bullet $ {\bf Standard quadratic program.}  
Coined by 
\cite{Bomze98}, the standard quadratic program  (StQP)
\begin{equation} \label{eq:sqp}
\mbox{StQP} \ \left\{ \, \begin{array}{ll}
\displaystyle{
\operatornamewithlimits{\mbox{\bf minimize}}_{x \in \mathbb{R}^n}
} & \frac{1}{2}x^{\top}Qx + c^{\top}x \\ [0.1in]
\mbox{\bf subject to} & \displaystyle{
\sum_{i=1}^n
} \, x_i = 1 \ \mbox{\bf and} \ 
x_i \geq 0, \quad i = 1, \ldots, n
\end{array} \right\},
\end{equation}
is a fundamental problem in nonlinear programming and critical 
in various applications. Following 
Sections~\ref{sec:LPCC} and \ref{sec:MILP formulations}, a 
full MILP reformulation for the StQP \eqref{eq:sqp} is given by 
\begin{equation} \label{eq:lpcc for sqp}
\begin{array}{ll}
\displaystyle{
\operatornamewithlimits{\mbox{\bf minimize}}_{
(x,z,\lambda,\mu) \in \mathbb{R}^{3n+1}}
} & \frac{1}{2}(c^{\top}x + \mu) \\ [0.1in]
\mbox{\bf subject to} & Qx - \lambda - \mu \onebld = 
-c \\ [0.1in]
& \onebld ^\top x = 1 \\ [0.1in]
& 0 \leq \lambda_j \leq Mz_j \hspace{0.55in} 
\quad j = 1, \ldots, n \\ [0.1in]
& 0 \leq x_j \leq (1 - z_j) \hspace{0.35in} \quad 
j = 1, \ldots, n \\ [0.1in]
\mbox{\bf and} & z_j \in \{0,1\}, \hspace{0.72in} 
\quad j = 1, \ldots, n
\end{array} 
\end{equation}
where $\onebld$ is the vector
of all ones and
$M=2n \max\limits_{i,j \in \{1, \ldots, n\}} |Q_{ij}|$ 
in \eqref{eq:lpcc for sqp}.  This bound is based on  
\cite[Theorem~1 and Proposition~1]{XiaVeraZuluaga20}, where
a detailed discussion on its derivation can be found. 
The paper \cite{ChenBurer12} also provides a detailed 
discussion of such bounds for the QP dual variable.  
In the experiments, we generate five random StQP instances for 
each combination of varying parameters 
$n \in \{200,500,1000,2000\}$ using the same method as in 
\cite{Nowak99, ScozzariTardella08,XiaVeraZuluaga20}; 
following \cite{Nowak99}, we employ a ``density'' parameter 
$\rho \in \{ 0.5, 0.75 \}$ in 
the construction of the matrix $Q$.  
This density controls the portion of the edges of the simplex along which the quadratic objective is convex.
Indirectly, $\rho$ also controls the difficulty of the problem. The larger $\rho$ is, the more difficult the problem is.
More formal and detailed discussion can be found in \cite{Nowak99}.  Also, the vector $c$ is set to zero in all 
instances as in the literature. 

\gap

\noindent $\bullet $ {\bf Quadratic assignment problem.}  
In a quadratic assignment problem (QAP), one aims to find the optimal assignment of a set of facilities to a set of locations. It has a variety of applications including facility layout, scheduling and economics. Since its introduction in 1958 by Koopmans and Beckmann  \cite{KoopmansBeckmann57}, QAP has been known to be one of the most challenging NP-hard problems. To put it formally,  let $n$ denote the number of facilities (and locations), $F_{ij}$  the flow between facilities $i$ and $j$,  and $D_{kl}$ the distance between locations $k$ and $l$. Define the assignment decision variables $\{ x_{ij} \}_{i,j=1}^n$, where $x_{ij}$ equals 1 if facility $i$ is assigned to location $j$, and 0 otherwise. Then  QAP can be formulated as a binary program
\begin{equation}\label{eq:qap_binary}
\underset{x\in \{0,1\}^{n^2}}{\bf minimize} \, \thalf 
\sum_{i=1}^n \sum_{j=1}^n \sum_{k=1}^n \sum_{l=1}^n 
F_{ij} D_{kl} x_{ik} x_{jl},    
\end{equation}
where the objective represents the total cost of transporting the product flows through the distances associated with the assigned locations.

To transform the binary program \eqref{eq:qap_binary} into a continuous one, we adopt the following strategy.  Define   $c_{ijkl} \triangleq F_{ij} \cdot D_{kl}$ and two matrices $S$,  $Q\in\R^{n^2\times n^2}$ by letting $S_{(i-1)n+k, (j-1)n+l} = c_{ijkl}$ and $Q = S - \alpha I$, where $\alpha > \max\limits_{i} \left(\sum\limits_{j} |S_{ij}|\right)$ guarantees the negative definiteness of the matrix $Q$. Introducing the vector 
$x \triangleq ( x_{ij} )_{i,j=1}^n$, the QAP can be formulated as the following quadratic program of the form~\eqref{eq:forward QP}:
\begin{equation}\label{eq:qap_qp}
\begin{array}{lll}
\displaystyle{
\operatornamewithlimits{\mbox{\bf minimize}}_{
\mathbf{x} \in \mathbb{R}^{n^2}}
} & \thalf \, x^{\top} Q x \\ [0.1in]
\mbox{\bf subject to} & \displaystyle{
\sum_{i=1}^n
} \, x_{ij} = 1, & j = 1, \ldots, n \\ [0.2in]
& \displaystyle{
\sum_{j=1}^n
} \, x_{ij} = 1, & i = 1, \ldots, n, \\ [0.2in]
\mbox{\bf and} & x_{ij} \geq 0, & i, j = 1, \ldots, n,
\end{array}
\end{equation}
where the matrix $Q$ is not positive semidefinite. 
The equivalence between \eqref{eq:qap_binary} and 
\eqref{eq:qap_qp} is shown in \cite{BazaraaSherali82}.
The derivation of the upper bounds for the dual variables
complementary to the primal $x$-variables follows that in
\cite{XiaVeraZuluaga20}, resulting in the bound of
$2n^2$ times the largest absolute value of the elements of $Q$.

\gap

\noindent   We evaluate the performance of PIP and FMIP methods on 20 QAP instances with various sizes from the QAPLIB library \cite{BurkardKarischRendl97}. These instances include \textit{esc16b},  \textit{nug18}, \textit{nug20}, \textit{tai20a}, \textit{nug21}, \textit{nug22}, \textit{nug24}, \textit{nug25}, \textit{tai25a}, \textit{bur26a}, \textit{bur26b}, \textit{bur26c}, \textit{bur26d}, \textit{nug27}, \textit{nug28}, \textit{tho30}, \textit{tai30a}, \textit{tai35a}, \textit{tho40}, \textit{tai40a}, where the numbers indicate $n$. The \textit{nug} series of instances, named after the first
author of \cite{Nugent68}, are particularly noted for their use in evaluating facility location assignments. The \textit{bur} series, named after Burkard, the first author of the QAPLIB publication \cite{BurkardKarischRendl97}, models the problem of designing efficient typing keyboards \cite{Burkard1977}. The \textit{tho} instances are provided by Thonemann and B\"olte in \cite{ThonemannBolte1994}. The \textit{esc} instance is derived from practical problems such as the testing of sequential circuits in computer science and backboard wiring problems \cite{EsWu90}. And the \textit{tai} series are uniformly generated and were proposed in \cite{Taillard91}. These instances have been widely used for assessing the efficiency of QAP solving algorithms in literature. 

\gap

\noindent $\bullet $ {\bf Inverse quadratic program.} 
As described in Section~\ref{sec:LPCC},
inverse (convex) quadratic programs (InvQP) can be formulated as 
LPCC (\ref{eq:inverse QP}) naturally. In our experiments, 10 
inverse QP instances with 2 different sizes are randomly 
generated following the scheme proposed in 
\cite[Section~6.3]{JMoroniPangWaechter18}, with a 
modification on the fixed sparsity to a higher 50\% for both $Q$ 
and $D$. Specifically, 5 instances have $m=200$ and $n=150$ 
hence 200 integer variables in total, while the other 5 
instances have $m=1000$ and $n=750$ hence 1000 integer variables 
in the corresponding full MILPs.  The upper bounds of the
complementary variables in the LPCC are provided 
in \cite[Section~6.3]{JMoroniPangWaechter18}.

\gap

\noindent $\bullet $ {\bf Inverse 
affine equilibrium problem.}
The forward problem in this group is an equilibrium problem
modeled by the simple affine
variational inequality (AVI) in the variable $y$ 
parameterized by a bounded first-level design variable 
$x$ in an equality
constraint: find $\bar{y} \in P(x)$ such that
\[
( \, q + Q \bar{y} \, )^{\top}( \bar{y} - y ) \, \geq \, 0,
\epc \forall \, y \, \in \, P(x) \, \triangleq \, 
\left\{ \, y \, \in \, \mathbb{R}^m_+ \, \mid \, 
\onebld^{\top} y \, = \, 1 + a^{\top}x \, \right\}.
\]
Each variable $x_i$ is subject to upper ($u_x$) and lower 
($\ell_x$) bound.  By grouping together $x$ with the 
multiplier $\mu$ of the equality constraint 
$\onebld^{\top} y \, = \, 1 + a^{\top}x$, we can 
easily write the inverse of this AVI in the form of the 
LPCC (\ref{eq:LPCC with x constraint}) with additional
constraints on $x$:
\begin{equation} \label{eq:LPCC in InvAVI}
\begin{array}{ll}
\displaystyle{
\operatornamewithlimits{\mbox{\bf minimize}}_{(x,y,z) 
\in \mathbb{R}^{n + 2m}}
} & \theta \, \triangleq \, 
c^{\top}x + e^{\top}y \\ [0.1in]
\mbox{\bf subject to} & \left[ \begin{array}{cc}
0 & 1 \\ [5pt]
-a^{\top} & 0
\end{array} \right] \left( \begin{array}{c}
x \\ [5pt]
\mu
\end{array} \right) + \left[ \begin{array}{l}
Q \\ [5pt]
\onebld^{\top}
\end{array} \right] y - \left[ \begin{array}{l}
\mathbb{I} \\ [5pt]
0
\end{array} \right] z \, = \, \left( 
\begin{array}{c}
-q \\ [5pt]
1
\end{array} \right); \
\ell_x \, \onebld \, \leq \, x \, \leq \,  u_x \, \onebld
\\ [0.3in]
\mbox{\bf and} & 0 \, \leq \, y \, \perp \, z \, \geq \, 0.
\end{array} \end{equation}
In the experiments, we randomly generate an \emph{asymmetric} matrix
$Q \in \mathbb{R}^{m \times m}$ and vectors $c \in \mathbb{R}^n$
and $e \in \mathbb{R}^m$.  Next we generate a vector
$a \in \mathbb{R}^n_+$, and an initial $x^0$ satisfying
the bounds which we set to be $\ell_x = 0.1$ and $u_x = 1$.
We generate an initial $z^0$ with half of its components
equal to zero.  For the zero components of $z^0$, 
we set the corresponding (nonzero) components of $y^0$
all equal to $\displaystyle{
\frac{m}{2}
} \, ( 1 + a^{\top}x^0 )$; we then let
$q = -Qy^0 +z^0$.  Thus the triplet $(x^0,y^0,z^0)$
is feasible to (\ref{eq:LPCC in InvAVI}).  The upper bounds
for $y$ and $z$ are set to be $u_y = 1 + n \max( a_i )u_x$ and
$u_z = m \, u_y \, \displaystyle{
\max_{ij}
} \, (Q_{ij}) + \displaystyle{
\max_i
} \, q_i$, respectively.

 Similar to the previous problem sets, to test the performance 
and scalability of the PIP method, we include 5 groups of 
5 InvAVI instances with different sizes: the number of 
complementarity pairs 
$m \in \{ 100, 150, 250, 300, 350\}$, and the number 
of the $x$-variable 
is always $\frac{m}{4}$ rounded to the nearest integer.

\subsection{Numerical results}

Throughout the analysis of the results, 
``average'' refers to ``arithmetic average''.  We include 
results from the full MILP solution approach (FMIP) and the warm started full MILP approach (FMIP-W) on the same instances for comparison with the PIP method. In the FMIP approach, problems are reformulated into \eqref{eq:FIP of LPCC} and solved by the MILP solver directly, under a time limit comparable to the average longest computation time of the PIP methods on instances with similar size and complexity. The FMIP approach shares the same essential procedure with the solution method described in \cite{XiaVeraZuluaga20}, especially on QP, where it showcases impressive performance advantage over other popular approaches as tested in the paper. And in the FMIP-W approach, there is an extra step included at the beginning of the FMIP procedure to produce the same feasible solution as the PIP methods for the MIP solver to warm start with, which in comparison can help us further understand the capability of solution improvement of the PIP method. 

\gap 

To evaluate the performance of the proposed PIP method, StQP, QAP, InvQP, and InvAVI instances described in Section \ref{sec: instances} are solved using the PIP, FMIP, and FMIP-W methods implemented as described in Section \ref{sec: implementation}. In the numerical results, we record the best objective value of the solutions produced by all methods on the same instance, together with the corresponding total running time spent in seconds to evaluate the quality of solutions produced by each method in comparable time. Note that for a fair comparison and a better understanding of what to expect in practical usage, such total time is the entire computational time from initialization to (potentially early) termination, including the time spent on any problem formulation and solving for initial solutions. In 
particular, the reported IPOPT time is the time this solver takes to obtain the initial solution used to start the PIP method; the same applies to other initialization methods. The information in the parenthesis following ``FMIP'' and ``FMIP-W'' in the tables indicates the time limit we provided to the MILP solver, and in the case where the FMIP or FMIP-W method terminates with global optimality, we emit the time limit and denote with ``(Opt)'' instead.  Whereas for the PIP methods, the $p_{\max}$ values in the parenthesis indicate the maximum proportion of unfixed integer variables as described in Section~\ref{sec:PIP}. 

\gap

For the StQP instances, we also use GUROBI (version 11.0.1) and BARON (version 24.12.21) as the QP global solvers for directly solving the continuous quadratic programs in its original form. 
 We believe that these results provide a good addition to \cite{XiaVeraZuluaga20}, where it is demonstrated that solving QPs as a MILPs to \emph{certified global} optimality is significantly more efficient than solving them in its original continuous form. In addition, because many heuristic techniques, such as multi-start and local search, are integrally embedded in {\sc Baron} for solving the problem, our results also indirectly illustrate how PIP improves solutions compared with heuristic methods. Both global QP solvers are in default settings and given one hour of solver time limit regardless of problem size.

\gap

\noindent $\bullet $ {\bf StQP results.}
Starting with StQP results presented in Table \ref{tab:sqp_200_05} to \ref{tab:sqp_2000_075}, we can see that, for StQP instances, PIP consistently produces solutions with the high quality and meanwhile demand short or at least comparable solution time in contrast to the other methods. Moreover, the superiority of PIP on improving solution quality becomes even more prominent when the number of integer variables and density increase. To be more precise, we discuss the results under each specific setting to showcase how well PIP adapts to problem size and complexity. 

\gap 

Table \ref{tab:sqp_200_05}, \ref{tab:sqp_200_075}, and \ref{tab:sqp_500_05} present the objective values and running times for each method under easy StQP settings. In these scenarios, 
FMIP successfully solves all instances to provable global optimality, allowing us to measure the improvement of our new method over IPOPT. For example, with $n=200$ and $\rho=0.75$, PIP (0.8) reduces the optimality gap by at least 50\% on average in 15 seconds. Additionally, it is worth noting that PIP (0.9) consistently produces globally optimal solutions for all instances. These results align with Proposition~\ref{pr:termination of PIP} and the remarks that follow.  PIP also outperforms the global QP solvers significantly in these instances.
\begin{table}[!h]
\centering
\caption{StQP, $n=200$, $\rho=0.5$}
\label{tab:sqp_200_05}
\small
\begin{tabular}{
  @{}l 
  S[table-format=-1.2] 
  S[table-format=4.0]  
  S[table-format=-1.2] 
  S[table-format=4.0]  
  S[table-format=-1.2] 
  S[table-format=4.0]  
  S[table-format=-1.2] 
  S[table-format=4.0]  
  S[table-format=-1.2] 
  S[table-format=4.0]  
  @{} 
}
\toprule
 & \multicolumn{2}{c}{1} & \multicolumn{2}{c}{2} & \multicolumn{2}{c}{3} & \multicolumn{2}{c}{4} & \multicolumn{2}{c}{5} \\
\cmidrule(lr){2-3} \cmidrule(lr){4-5} \cmidrule(lr){6-7} \cmidrule(lr){8-9} \cmidrule(l){10-11}
{Method} & {Obj} & {Time} & {Obj} & {Time} & {Obj} & {Time} & {Obj} & {Time} & {Obj} & {Time} \\
\midrule
IPOPT         & -3.07 & 4    & -3.17 & 4    & -3.19 & 4    & -3.16 & 4    & -2.89 & 4    \\
BARON (1hr)   & -3.09 & 3606 & -3.42 & 3607 & -3.26 & 3607 & -3.36 & 3606 & -3.36 & 3608 \\
GUROBI (1hr)    & -3.07 & 3633 & -3.17 & 3633 & -3.19 & 3633 & -3.16 & 3633 & -2.91 & 3633 \\
FMIP (Opt)    & -3.22 & 41   & -3.42 & 14   & -3.31 & 14   & -3.36 & 13   & -3.36 & 13   \\
FMIP-W (1min) & -3.22 & 19   & -3.42 & 17   & -3.31 & 18   & -3.36 & 17   & -3.36 & 17   \\
PIP (0.4)     & -3.22 & 10   & -3.17 & 9    & -3.19 & 9    & -3.16 & 10   & -2.89 & 10   \\
PIP (0.6)     & -3.22 & 10   & -3.17 & 10   & -3.19 & 10   & -3.36 & 11   & -2.89 & 10   \\
PIP (0.8)     & -3.22 & 12   & -3.18 & 12   & -3.19 & 11   & -3.36 & 12   & -3.14 & 14   \\
PIP (0.9)     & -3.22 & 18   & -3.42 & 24   & -3.31 & 24   & -3.36 & 18   & -3.36 & 25   \\
\bottomrule
\end{tabular}
\end{table}

\begin{table}[!h]
\centering
\caption{StQP, $n=200$, $\rho=0.75$}
\label{tab:sqp_200_075}
\small
\begin{tabular}{
  @{}l 
  S[table-format=-1.2] 
  S[table-format=4.0]  
  S[table-format=-1.2] 
  S[table-format=4.0]  
  S[table-format=-1.2] 
  S[table-format=4.0]  
  S[table-format=-1.2] 
  S[table-format=4.0]  
  S[table-format=-1.2] 
  S[table-format=4.0]  
  @{} 
}
\toprule
 & \multicolumn{2}{c}{1} & \multicolumn{2}{c}{2} & \multicolumn{2}{c}{3} & \multicolumn{2}{c}{4} & \multicolumn{2}{c}{5} \\
\cmidrule(lr){2-3} \cmidrule(lr){4-5} \cmidrule(lr){6-7} \cmidrule(lr){8-9} \cmidrule(l){10-11}
{Method} & {Obj} & {Time} & {Obj} & {Time} & {Obj} & {Time} & {Obj} & {Time} & {Obj} & {Time} \\
\midrule
IPOPT          & -3.27 & 4    & -3.34 & 4    & -3.09 & 5    & -3.25 & 4    & -3.22 & 4    \\
BARON (1hr)    & -3.29 & 3606 & -3.45 & 3732 & -3.43 & 3603 & -3.38 & 3607 & -3.33 & 3608 \\
GUROBI (1hr)     & -3.27 & 3633 & -3.34 & 3633 & -3.30 & 3633 & -3.30 & 3633 & -3.36 & 3633 \\
FMIP (Opt)     & -3.35 & 107  & -3.50 & 111  & -3.43 & 74   & -3.41 & 126  & -3.37 & 371  \\
FMIP-W (1min)  & -3.31 & 70   & -3.50 & 70   & -3.43 & 37   & -3.41 & 35   & -3.37 & 51   \\
PIP (0.4)      & -3.27 & 10   & -3.34 & 10   & -3.10 & 11   & -3.25 & 10   & -3.22 & 10   \\
PIP (0.6)      & -3.27 & 10   & -3.34 & 11   & -3.25 & 12   & -3.30 & 11   & -3.35 & 11   \\
PIP (0.8)      & -3.27 & 13   & -3.42 & 15   & -3.30 & 15   & -3.35 & 13   & -3.35 & 13   \\
PIP (0.9)      & -3.35 & 74   & -3.50 & 53   & -3.43 & 65   & -3.41 & 65   & -3.37 & 78   \\
\bottomrule
\end{tabular}
\end{table}
\begin{table}[!h]
\centering
\caption{StQP, $n=500$, $\rho=0.5$}
\label{tab:sqp_500_05}
\small
\begin{tabular}{
  @{}l 
  S[table-format=-1.2] 
  S[table-format=4.0]  
  S[table-format=-1.2] 
  S[table-format=4.0]  
  S[table-format=-1.2] 
  S[table-format=4.0]  
  S[table-format=-1.2] 
  S[table-format=4.0]  
  S[table-format=-1.2] 
  S[table-format=4.0]  
  @{} 
}
\toprule
 & \multicolumn{2}{c}{1} & \multicolumn{2}{c}{2} & \multicolumn{2}{c}{3} & \multicolumn{2}{c}{4} & \multicolumn{2}{c}{5} \\
\cmidrule(lr){2-3} \cmidrule(lr){4-5} \cmidrule(lr){6-7} \cmidrule(lr){8-9} \cmidrule(l){10-11}
{Method} & {Obj} & {Time} & {Obj} & {Time} & {Obj} & {Time} & {Obj} & {Time} & {Obj} & {Time} \\
\midrule
IPOPT         & -3.17 & 30   & -3.32 & 30   & -3.28 & 33   & -3.24 & 32   & -3.25 & 31   \\
BARON (1hr)   & -3.39 & 3620 & -3.37 & 3623 & -3.32 & 3621 & -3.38 & 3621 & -3.29 & 3620 \\
GUROBI (1hr)    & -3.17 & 3649 & -3.32 & 3652 & -3.24 & 3649 & -3.24 & 3649 & -3.26 & 3649 \\
FMIP (Opt)    & -3.48 & 879  & -3.54 & 2409 & -3.43 & 1111 & -3.42 & 883  & -3.48 & 1370 \\
FMIP-W (10min)& -3.48 & 664  & -3.47 & 663  & -3.43 & 666  & -3.42 & 665  & -3.48 & 664 \\
PIP (0.4)     & -3.17 & 64   & -3.38 & 65   & -3.28 & 67   & -3.27 & 67   & -3.25 & 65 \\
PIP (0.6)     & -3.32 & 70   & -3.38 & 68   & -3.28 & 72   & -3.27 & 71   & -3.28 & 70 \\
PIP (0.8)     & -3.32 & 86   & -3.38 & 81   & -3.43 & 504  & -3.33 & 92   & -3.36 & 95 \\
PIP (0.9)     & -3.48 & 687  & -3.54 & 596  & -3.43 & 1105& -3.42 & 693  & -3.48 & 636 \\
\bottomrule
\end{tabular}
\end{table}

\gap

On our machine, we notice that FMIP is no longer able to provide global optimality in one hour for the more complex instances with $n=500$ and $\rho=0.75$. On the contrary, PIP (0.4 / 0.6 / 0.8) start to produce solutions with matching or strictly better quality in a fraction of time compared to FMIP and FMIP-W, which is shown in Table \ref{tab:sqp_500_075}, as the best solution quality maintained by PIP (0.9) in a comparable computation time of 10 minutes. An interesting observation is that the IPOPT (NLP approach) is able to produce better solutions than 
GUROBI (FMIP approach)
on some instances in Table \ref{tab:sqp_500_075}, which does not happen on the smaller or less complex instances. However, such increase in problem size and complexity does not impact PIP solution quality as much, suggesting the advantage in scalability of PIP. Another notable observation is that on instance 2 of Table \ref{tab:sqp_500_075},  IPOPT yields the best solution already, which we suspect is optimal since there is still no better solution found after another exhaustive FMIP-W run for 3 hours, marking the sole instance in our entire experiments where PIP does not provide strict improvement to the solution provided by IPOPT. As for the comparison with the one-hour approaches, PIP solutions still outperform GUROBI, but BARON is catching up in solution quality. We also explore the performance of FMIP given a one-hour computational time to see if global optimality can be obtained, which does not happen in any instance, similar to what we see in the global solver results, although it does lead to FMIP to provide solutions with better solution quality. 

\begin{table}[!h]
\centering
\caption{StQP, $n=500$, $\rho=0.75$}
\label{tab:sqp_500_075}
\small
\begin{tabular}{
  @{}l 
  S[table-format=-1.2] 
  S[table-format=4.0]  
  S[table-format=-1.2] 
  S[table-format=4.0]  
  S[table-format=-1.2] 
  S[table-format=4.0]  
  S[table-format=-1.2] 
  S[table-format=4.0]  
  S[table-format=-1.2] 
  S[table-format=4.0]  
  @{} 
}
\toprule
 & \multicolumn{2}{c}{1} & \multicolumn{2}{c}{2} & \multicolumn{2}{c}{3} & \multicolumn{2}{c}{4} & \multicolumn{2}{c}{5} \\
\cmidrule(lr){2-3} \cmidrule(lr){4-5} \cmidrule(lr){6-7} \cmidrule(lr){8-9} \cmidrule(l){10-11}
{Method} & {Obj} & {Time} & {Obj} & {Time} & {Obj} & {Time} & {Obj} & {Time} & {Obj} & {Time} \\
\midrule
IPOPT         & -3.51 & 30   & -3.64 & 31   & -3.51 & 30   & -3.52 & 30   & -3.45 & 35   \\
BARON (1hr)   & -3.62 & 3626 & -3.50 & 3620 & -3.45 & 3621 & -3.61 & 3621 & -3.58 & 3621 \\
GUROBI (1hr)    & -3.51 & 3649 & -3.64 & 3649 & -3.51 & 3649 & -3.57 & 3649 & -3.45 & 3649 \\
FMIP (10min)  & -3.47 & 632  & -3.43 & 632  & -3.50 & 632  & -3.52 & 632  & -3.58 & 632  \\
FMIP (1hr)    & -3.62 & 3665 & -3.64 & 3666 & -3.56 & 3666 & -3.63 & 3665 & -3.59 & 3670 \\
FMIP-W (10min)& -3.55 & 664  & -3.64 & 664  & -3.51 & 664  & -3.59 & 663  & -3.53 & 668  \\
PIP (0.4)     & -3.52 & 66   & -3.64 & 65   & -3.51 & 65   & -3.52 & 65   & -3.45 & 70   \\
PIP (0.6)     & -3.52 & 70   & -3.64 & 69   & -3.51 & 69   & -3.54 & 71   & -3.53 & 77   \\
PIP (0.8)     & -3.58 & 124  & -3.64 & 86   & -3.63 & 102  & -3.61 & 107  & -3.53 & 101  \\
PIP (0.9)     & -3.58 & 723  & -3.64 & 687  & -3.64 & 702  & -3.61 & 707  & -3.58 & 701  \\
\bottomrule
\end{tabular}
\end{table}
\begin{table}[!h]
\centering
\caption{StQP, $n=1000$, $\rho=0.5$}
\label{tab:sqp_1000_05}
\small
\begin{tabular}{
  @{}l 
  S[table-format=-1.2] 
  S[table-format=4.0]  
  S[table-format=-1.2] 
  S[table-format=4.0]  
  S[table-format=-1.2] 
  S[table-format=4.0]  
  S[table-format=-1.2] 
  S[table-format=4.0]  
  S[table-format=-1.2] 
  S[table-format=4.0]  
  @{} 
}
\toprule
 & \multicolumn{2}{c}{1} & \multicolumn{2}{c}{2} & \multicolumn{2}{c}{3} & \multicolumn{2}{c}{4} & \multicolumn{2}{c}{5} \\
\cmidrule(lr){2-3} \cmidrule(lr){4-5} \cmidrule(lr){6-7} \cmidrule(lr){8-9} \cmidrule(l){10-11}
{Method} & {Obj} & {Time} & {Obj} & {Time} & {Obj} & {Time} & {Obj} & {Time} & {Obj} & {Time} \\
\midrule
IPOPT         & -3.36 & 166  & -3.42 & 160  & -3.50 & 156  & -3.35 & 164  & -3.16 & 170  \\
BARON (1hr)   & -3.51 & 3670 & -3.41 & 3671 & -3.51 & 3671 & -3.53 & 3674 & -3.47 & 3675 \\
GUROBI (1hr)    & -3.39 & 3702 & -3.40 & 3702 & -3.38 & 3703 & -3.42 & 3702 & -3.49 & 3703 \\
FMIP (10min)  & -3.51 & 725  & -3.41 & 728  & NA    & 730  & -3.37 & 727  & -3.46 & 729  \\
FMIP (1hr)    & -3.60 & 3898 & -3.57 & 3901 & -3.47 & 3897 & -3.52 & 3904 & -3.49 & 3909 \\
FMIP-W (10min)& -3.50 & 893  & -3.42 & 891  & -3.50 & 887  & -3.52 & 894  & -3.40 & 901  \\
PIP (0.4)     & -3.36 & 300  & -3.42 & 296  & -3.50 & 292  & -3.46 & 304  & -3.18 & 310  \\
PIP (0.6)     & -3.44 & 325  & -3.42 & 313  & -3.50 & 308  & -3.46 & 318  & -3.51 & 343  \\
PIP (0.8)     & -3.55 & 502  & -3.48 & 532  & -3.55 & 469  & -3.46 & 414  & -3.51 & 469  \\
\bottomrule
\end{tabular}
\end{table}
\begin{table}[!h]
\centering
\caption{StQP, $n=1000$, $\rho=0.75$}
\label{tab:sqp_1000_075}
\small
\begin{tabular}{
  @{}l 
  S[table-format=-1.2] 
  S[table-format=4.0]  
  S[table-format=-1.2] 
  S[table-format=4.0]  
  S[table-format=-1.2] 
  S[table-format=4.0]  
  S[table-format=-1.2] 
  S[table-format=4.0]  
  S[table-format=-1.2] 
  S[table-format=4.0]  
  @{} 
}
\toprule
 & \multicolumn{2}{c}{1} & \multicolumn{2}{c}{2} & \multicolumn{2}{c}{3} & \multicolumn{2}{c}{4} & \multicolumn{2}{c}{5} \\
\cmidrule(lr){2-3} \cmidrule(lr){4-5} \cmidrule(lr){6-7} \cmidrule(lr){8-9} \cmidrule(l){10-11}
{Method} & {Obj} & {Time} & {Obj} & {Time} & {Obj} & {Time} & {Obj} & {Time} & {Obj} & {Time} \\
\midrule
IPOPT         & -3.55 & 178  & -3.49 & 180  & -3.68 & 176  & -3.50 & 169  & -3.41 & 179 \\
BARON (1hr)   & -3.70 & 3674 & -3.59 & 3687 & -3.60 & 3670 & -3.61 & 3672 & -3.60 & 3682 \\
GUROBI (1hr)    & -3.49 & 3703 & -3.47 & 3703 & -3.53 & 3703 & -3.56 & 3703 & -3.61 & 3703 \\
FMIP (10min)  & -3.46 & 727  & -3.53 & 728  & -3.53 & 727  & NA    & 727  & NA    & 728 \\
FMIP (1hr)    & -3.52 & 3912 & -3.61 & 3917 & -3.61 & 3914 & NA    & 3907 & -3.14 & 3919 \\
FMIP-W (10min)& -3.55 & 910  & -3.55 & 910  & -3.68 & 908  & -3.50 & 900  & -3.41 & 910 \\
PIP (0.4)     & -3.55 & 312  & -3.53 & 322  & -3.68 & 310  & -3.50 & 302  & -3.41 & 312 \\
PIP (0.6)     & -3.60 & 347  & -3.60 & 384  & -3.68 & 332  & -3.62 & 353  & -3.54 & 379 \\
PIP (0.8)     & -3.63 & 835  & -3.60 & 645  & -3.73 & 711  & -3.62 & 696  & -3.66 & 1154 \\
\bottomrule
\end{tabular}
\end{table}

\gap

When the size of the StQP is increased further to $n=1000$, the advantage of PIP in solution quality and efficiency extends further. As shown in Table \ref{tab:sqp_1000_05} and \ref{tab:sqp_1000_075}, PIP provides solutions with strictly better quality in shorter time on all instances compared to FMIP, and in 9 out of 10 instances compared to FMIP-W. Especially in the harder instances with $\rho = 0.75$ in Table \ref{tab:sqp_1000_075}, PIP ($0.6$) outperforms FMIP-W with better solution quality consistently while requiring less than half computation time. On the contrary, with 1000 integer variables to solve, FMIP starts to struggle to provide any feasible solutions on some instances in a comparable time. Given one hour, FMIP does eventually improve and provide better quality in 4 out of 10 instances than PIP solutions obtained in 7 to 8 minutes. And compared with solutions obtained from one-hour methods, PIP outperforms GUROBI one-hour solution quality in all instances and BARON one-hour solution quality in 8 out of 10 instances.
\gap

On the largest and most challenging StQP instances with $n=2000$ in Table \ref{tab:sqp_2000_05} and \ref{tab:sqp_2000_075}, PIP maintains the leading quality of solutions in all instances with a shorter computation time compared to FMIP and FMIP-W. Notice that with 2000 integer variables, FMIP struggles further and only finds feasible solutions with subpar quality in 3 out of 10 instances, thus exposing the drawback in scalability of the full MILP approach. On the contrary, PIP continues to excel. On instance 1, 2, 3, and 5 of Table \ref{tab:sqp_2000_075}, where FMIP-W fails to provide any improvement from IPOPT solutions in more than an hour, PIP ($0.8$) is still able to provide considerable improvement in a much shorter time consistently. Compared with global QP solvers' one-hour results, PIP solution quality is similar or better in most instances with shorter computational time, except for the fifth instance in Table \ref{tab:sqp_2000_075}, where BARON one-hour solution quality excels by a large margin. 
\begin{table}[!h]
\centering
\caption{StQP, $n=2000$, $\rho=0.5$}
\label{tab:sqp_2000_05}\small
\begin{tabular}{
  @{}l
  S[table-format=-1.2] 
  S[table-format=4.0]  
  S[table-format=-1.2] 
  S[table-format=4.0]  
  S[table-format=-1.2] 
  S[table-format=4.0]  
  S[table-format=-1.2] 
  S[table-format=4.0]  
  S[table-format=-1.2] 
  S[table-format=4.0]  
  @{}  
}
\toprule
 & \multicolumn{2}{c}{1} & \multicolumn{2}{c}{2} & \multicolumn{2}{c}{3} & \multicolumn{2}{c}{4} & \multicolumn{2}{c}{5} \\
\cmidrule(lr){2-3} \cmidrule(lr){4-5} \cmidrule(lr){6-7} \cmidrule(lr){8-9} \cmidrule(l){10-11}
{Method} & {Obj} & {Time} & {Obj} & {Time} & {Obj} & {Time} & {Obj} & {Time} & {Obj} & {Time} \\
\midrule
IPOPT         & -3.54 & 871  & -3.35 & 894  & -3.43 & 922  & -3.22 & 990  & -3.47 & 969  \\
BARON (1hr)   & -3.63 & 3882 & -3.58 & 3878 & -3.49 & 3875 & -3.56 & 3882 & -3.47 & 3877 \\
GUROBI (1hr)    & -3.44 & 3920 & -3.46 & 3919 & -3.36 & 3922 & -3.36 & 3919 & -3.36 & 3919 \\
FMIP (10min)  & {NA}  & 1118 & {NA}  & 1119 & {NA}  & 1120 & {NA}  & 1119 & {NA}  & 1118 \\
FMIP (1hr)    & {NA}  & 4119 & -3.45 & 4120 & -3.35 & 4121 & -3.22 & 4119 & {NA}  & 4120 \\
FMIP-W (1hr)  & -3.57 & 4991 & -3.56 & 5014 & -3.50 & 5043 & -3.56 & 5116 & -3.47 & 5099 \\
PIP (0.4)     & -3.54 & 1418 & -3.35 & 1433 & -3.46 & 1482 & -3.51 & 1572 & -3.47 & 1518 \\
PIP (0.6)     & -3.54 & 1511 & -3.53 & 1608 & -3.63 & 1700 & -3.51 & 1636 & -3.52 & 1677 \\
PIP (0.8)     & -3.60 & 3398 & -3.57 & 4041 & -3.63 & 2917 & -3.63 & 4069 & -3.63 & 3564 \\
\bottomrule
\end{tabular}
\end{table}

\begin{table}[!h]
\centering
\caption{StQP, $n=2000$, $\rho=0.75$}
\label{tab:sqp_2000_075}
\small
\begin{tabular}{
  @{}l
  S[table-format=-1.2] 
  S[table-format=4.0]  
  S[table-format=-1.2] 
  S[table-format=4.0]  
  S[table-format=-1.2] 
  S[table-format=4.0]  
  S[table-format=-1.2] 
  S[table-format=4.0]  
  S[table-format=-1.2] 
  S[table-format=4.0]  
  @{} 
}
\toprule
 & \multicolumn{2}{c}{1} & \multicolumn{2}{c}{2} & \multicolumn{2}{c}{3} & \multicolumn{2}{c}{4} & \multicolumn{2}{c}{5} \\
\cmidrule(lr){2-3} \cmidrule(lr){4-5} \cmidrule(lr){6-7} \cmidrule(lr){8-9} \cmidrule(l){10-11}
{Method} & {Obj} & {Time} & {Obj} & {Time} & {Obj} & {Time} & {Obj} & {Time} & {Obj} & {Time} \\
\midrule
IPOPT         & -3.59 & 937  & -3.67 & 940  & -3.64 & 1000 & -3.65 & 1362 & -3.59 & 1030 \\
BARON (1hr)   & -3.73 & 4040 & -3.74 & 4042 & -3.64 & 4051 & -3.70 & 4037 & -3.85 & 4033 \\
GUROBI (1hr)    & -3.70 & 3921 & -3.72 & 3920 & -3.54 & 3920 & -3.61 & 3920 & -3.55 & 3921 \\
FMIP (10min)  & {NA}  & 1118 & {NA}  & 1119 & {NA}  & 1119 & {NA}  & 1119 & {NA}  & 1118 \\
FMIP (1hr)    & {NA}  & 4119 & {NA}  & 4119 & {NA}  & 4119 & {NA}  & 4119 & {NA}  & 4118 \\
FMIP-W (1hr)  & -3.59 & 5054 & -3.67 & 5076 & -3.64 & 5111 & -3.68 & 5499 & -3.59 & 5157 \\
PIP (0.4)     & -3.59 & 1489 & -3.67 & 1498 & -3.64 & 1548 & -3.65 & 1912 & -3.59 & 1579 \\
PIP (0.6)     & -3.76 & 2227 & -3.72 & 1927 & -3.64 & 1614 & -3.65 & 1975 & -3.65 & 1843 \\
PIP (0.8)     & -3.76 & 3039 & -3.72 & 2731 & -3.69 & 3130 & -3.69 & 3639 & -3.65 & 2720 \\
\bottomrule
\end{tabular}
\end{table}

\gap

 Comparing the capability to improve solution quality of PIP with different $p_{\max}$ values, we observe that across all StQP instances, except in one instance (with $n=500$ and $\rho=0.75$ when the IPOPT solution is already the best), PIP (0.8 or 0.9) always provides improvements from the initial solutions. We also observe that PIP with a low $p_{\max}$ value (0.4) is the fastest but provides improvement in fewer instances, which illustrates the natural trade-off between the solution quality and computational time for PIP. While with a medium $p_{\max}$ value, PIP ($0.6$) always provides decent improvements over the initial solutions without sacrificing much computational speed. Thus, along with the other adjustable factors discussed in Section \ref{sec: implementation}, $p_{\max}$ is one of the most important hyperparameters in PIP to be fine-tuned in order to achieve the users' preference (or a balance) between solution quality and computational speed. Thus, when prioritizing computational speed, lower values of $p_{\max}$ are recommended. Conversely, higher $p_{\max}$ values should be selected when the focus is on solution quality. To strike a balance between speed and quality, medium values of $p_{\max}$ are advisable. Conducting trial runs with sample instances can provide further insights into the optimal tuning of $p_{\max}$.

\begin{table}[!h]
    \centering
    \caption{StQP: Summary of arithmetic
    average performances (each over 5 instances)}
    \label{tab:stqp_summary}
    \setlength{\tabcolsep}{1.7pt}
    \small
    \begin{tabular}{c|
    S@{\hspace{-0.4em}}r|
    S@{\hspace{-0.4em}}r|
    S@{\hspace{-0.1em}}r|
    S@{\hspace{-0.5em}}r|@{\hspace{-0.5em}}
    S@{\hspace{-0.3em}}r|@{\hspace{-0.5em}}
    S@{\hspace{-0.3em}}r|
    S@{\hspace{-0.1em}}r|
    S@{\hspace{-0.1em}}r|
    c}
        \toprule
        $(n, \rho)$ & \multicolumn{2}{c|}{(200, 0.5)} & \multicolumn{2}{c|}{(200, 0.75)} & \multicolumn{2}{c|}{(500, 0.5)} & \multicolumn{2}{c|@{\hspace{-0.5em}}}{(500, 0.75)} & \multicolumn{2}{c|@{\hspace{-0.5em}}}{\,\,(1000, 0.5)} & \multicolumn{2}{c|}{\,\,(1000, 0.75)} & \multicolumn{2}{c|}{(2000, 0.5)} & \multicolumn{2}{c|}{(2000, 0.75)} & { }\\
        { } & {Stnd} & { } & {Stnd} & { } & {Stnd} & { } & {Stnd} & { } & {Stnd} & { } & {Stnd} & { } & {Stnd} & { } & {Stnd} & { } & \textbf{OBJ}  \\
        {Method} & {Obj} & {Time} & {Obj} & {Time} & {Obj} & {Time} & {Obj} & {Time} & {Obj} & {Time} & {Obj} & {Time} & {Obj} & {Time} & {Obj} & {Time} & \textbf{IMP} \\
        \midrule
IPOPT & 100.00 & 4 & 100.00 & 4 & 95.79 & 31 & 68.51 & 31 & 83.59 & 163 & 64.18 & 176 & 75.61 & 929 & 75.11 & 1054 & -- \\
BARON (1hr) & 25.67 & 3607 & 30.33 & 3631 & 53.81 & 3621 & 38.49 & 3622 & 33.39 & 3672 & 19.64 & 3677 & 25.27 & 3879 & 10.20 & 4041 & -- \\
GUROBI (1hr) & 99.15 & 3633 & 62.73 & 3633 & 99.13 & 3650 & 59.03 & 3649 & 70.87 & 3702 & 69.41 & 3703 & 82.89 & 3920 & 72.77 & 3920 & -- \\
FMIP (Opt) & 0.00 & 19 & 0.00 & 158 & 0.00 & 1330 & {--}  & {--\,\,} & {--} & {--\,\,\,} & {--} & {--\,\,\,} & {--} & {--\,\,\,\,} & {--} & {--\,\,\,\,}   & -- \\
FMIP (10min) & {--} & {--\,\,} & {--} & {--\,\,}  & {--} & {--\,\,}  & 76.37 & 632 & 58.79\tiny{[4]} & 728 & 85.57\tiny{[3]} & 727 & NA & 1119 & NA & 1119 & -- \\
FMIP (1hr) & {--} & {--\,\,} & {--} & {--\,\,}  & {--} & {--\,\,}  & 8.85 & 3667 & 12.63 & 3902 & 58.60\tiny{[4]} & 3914 & 85.51 & 4120 & NA & 4119 & -- \\
FMIP-W & 0.00 & 18 & 10.00 & 53 & 6.36 & 664 & 38.76 & 665 & 39.36 & 893 & 55.52 & 908 & 32.61 & 5053 & 68.44 & 5179 & 57.37\% \\
PIP (0.4) & 80.00 & 10 & 99.41 & 10 & 87.00 & 66 & 67.21 & 66 & 70.23 & 300 & 58.41 & 312 & 59.32 & 1485 & 75.11 & 1605 & 9.77\% \\
PIP (0.6) & 60.00 & 10 & 67.00 & 11 & 74.72 & 70 & 52.12 & 71 & 44.73 & 321 & 19.16 & 359 & 27.82 & 1626 & 36.83 & 1917 & 43.72\% \\
PIP (0.8) & 48.56 & 12 & 47.81 & 14 & 45.30 & 172 & 18.45 & 104 & 22.84 & 477 & 7.07 & 808 & 4.03 & 3598 & 21.27 & 3052 & 69.68\% \\
PIP (0.9) & 0.00 & 22 & 0.00 & 67 & 0.00 & 743 & 10.34 & 704 & {--} & {--\,\,\,} & {--} & {--\,\,\,} & {--} & {--\,\,\,\,} & {--} & {--\,\,\,\,} & 96.23\% \\

        \bottomrule
    \end{tabular}
\end{table}

\gap

\noindent {\bf A summary.}
To provide a summary of the average performance of each 
method on StQP instances and also to gauge 
the quality of the solutions of the methods due to early 
termination that may
invalidate Proposition~\ref{pr:termination of PIP}, 
we standardize the computed objective values to a
range between 0 and 100 with 
\[
\textrm{Standardized Obj} \, \triangleq \, \displaystyle{
\frac{\textrm{Original Obj} - \textrm{min Obj}}{\textrm{max Obj} - \textrm{min Obj}}
} \, \times \, 100, 
\]
where \textrm{max} and \textrm{min Obj} are the maximal 
and minimal objective values achieved by all methods on 
the same instance.  The averaged standardized objective 
values over 5 instances per setting for each method are 
presented in Table~\ref{tab:stqp_summary}, together with 
the corresponding average total computation time.   
Since we are solving minimization problems,
a smaller objective value is desired; hence, after 
standardization, 0 indicates the best while 100 is the 
worst.  This standardized objective is in contrast to 
the quantity
\[
\textrm{OBJ IMP} \, \triangleq \, \displaystyle{
\frac{\textrm{IPOPT Obj} - \textrm{Obj}}{\textrm{IPOPT Obj}-\textrm{min Obj}}
} \, \times \, 100\%,
\]
which is a measure
to quantify the improvement of the objective achieved 
by the PIP and FMIP-W method over the initial ones produced
by traditional 
NLP methods.  The mean value of OBJ IMP  averaged over all 
instances per method is presented in the last column.  
In
terms of this mean OBJ IMP, the improvement quality of the 
method over IPOPT ranges from 0 (worst) to  100 (best). 
If a method fails to solve all 5 instances of the setting, 
we denote the number of instances solved with a 
{\sl feasible} solution in the square brackets, 
and ``NA'' means that the 
method fails on all instances of the setting.  

\gap

From Table~\ref{tab:stqp_summary}, we can see that, for most StQP instances, PIP can produce solutions with the best quality in comparable or shorter time compared to the other methods. Note that in the columns of $(n=500, \rho =0.75)$ and $(n=1000, \rho =0.5)$,  FMIP (1hr) produces better solution quality given five times more computational time on instances. For the last instance setting $(n=2000, \rho=0.75)$, BARON shows exceptional solution quality due to its outstanding performance for the last instance in Table~\ref{tab:sqp_2000_075}. However, aside from this instance, BARON solutions are similar in quality compared to PIP (0.8) given 1000 seconds more computational time on average. In all instances with verifiable global optimality, PIP ($0.9$) always achieves the optimal solutions fast. While in larger instances ($n=1000$ or $2000$), the faster PIP ($0.8$) provides the best or close to the best solution quality with shorter computation time compared to other approaches. Compared with global QP solvers, PIP provides better solution quality in a short time than BARON and GUROBI in most instances. In all instances, PIP effectively improves the solution quality considerably.

\gap

\noindent $\bullet $ {\bf QAP and InvQP results.}
On the QAP instances, in addition to the objective value of best solutions produced by PIP and FMIP methods, we include the best known solutions reported by the QAPLIB database \cite{BurkardKarischRendl97} for a better understanding of the solution quality achieved by PIP under our limited computational budget. Note that the best known solutions are produced by resource-intensive methods designated specifically for QAP, such as robust and parallel adaptive tabu search with supercomputers, see \cite{BruenggerEtAl96, Taillard91, Taillard94, TaHaGe97}. While PIP, applicable to general QP and LPCC, is capable of producing solution with close gap in objective value to the best known solutions on a consumer desktop in a reasonable time.

\gap

Similar to what we have seen in StQP results, the QAP results show that PIP outperforms FMIP in solution quality and computation time on all instances. On the \textit{esc16b} instance, PIP ($0.6$) is able to match the best known solution in about 3 minutes. And as the problem size $n$ increases towards $30$, PIP is still able to provide solutions close to the best known results. In large instances with $n\ge 30$, while FMIP lags behind in solution quality despite being given hours more computation time (4 hours in \textit{tai30a} and \textit{tai35a}; 8 hours in \textit{tai40a}), PIP is able to maintain relative gaps in objective values below $10 \%$ compared to the best known solutions. Note that in QAP instances, PIP ($0.4$) yields slightly lower solution quality compared to PIP ($0.6$) but in significantly less time. This highlights the importance of adjusting the $p_{\max}$ value to align with user preferences.


\begin{table}[!h]
\centering
\caption{QAP Instances Group 1, $n < 30$}
\label{tab:qap_group1}
\small
\begin{tabular}{@{}l
  S[table-format=3.0]@{\hspace{0.9em}} S[table-format=3.0]
  S[table-format=3.0]@{\hspace{0.9em}} S[table-format=3.0]
  S[table-format=3.0]@{\hspace{0.9em}} S[table-format=3.0]
  S[table-format=6.0]@{\hspace{0.9em}} S[table-format=3.0]
  S[table-format=3.0]@{\hspace{0.9em}} S[table-format=3.0]@{}}
\toprule
& \multicolumn{2}{c}{esc16b} 
& \multicolumn{2}{c}{nug18}
& \multicolumn{2}{c}{nug20} 
& \multicolumn{2}{c}{tai20a}
& \multicolumn{2}{c}{nug21} \\
\cmidrule(lr){2-3} \cmidrule(lr){4-5} \cmidrule(lr){6-7} \cmidrule(lr){8-9} \cmidrule(l){10-11}
{Method} & {Obj} & {Time} 
         & {Obj} & {Time} 
         & {Obj} & {Time} 
         & {Obj} & {Time} 
         & {Obj} & {Time} \\
\midrule
Best Known   & 292    &       & 1930   &       & 2570   &       & 703482   &       & 2438   &       \\
FMIP (10min) & 292    & 607   & 2092   & 603   & 2728   & 615   & 744732   & 605   & 2954   & 618   \\
PIP (0.4)    & 310    &  97   & 2134   & 22    & 2758   & 112   & 735928   & 162   & 2568   & 145   \\
PIP (0.6)    & 292    & 196   & 2030   & 265   & 2698   & 355   & 735266   & 500   & 2568   & 328   \\
\bottomrule
\end{tabular}
\end{table}

\begin{table}[!h]
\centering
\caption{QAP Instances Group 2, $n < 30$}
\label{tab:qap_group2}
\small
\begin{tabular}{@{}l
  S[table-format=3.0]@{\hspace{0.9em}} S[table-format=3.0]
  S[table-format=3.0]@{\hspace{0.9em}} S[table-format=3.0]
  S[table-format=3.0]@{\hspace{0.9em}} S[table-format=3.0]
  S[table-format=7.0]@{\hspace{0.9em}} S[table-format=3.0]
  S[table-format=7.0]@{\hspace{0.9em}} S[table-format=3.0]@{}}
\toprule
& \multicolumn{2}{c}{nug22} 
& \multicolumn{2}{c}{nug24} 
& \multicolumn{2}{c}{nug25} 
& \multicolumn{2}{c}{tai25a} 
& \multicolumn{2}{c}{bur26a} \\
\cmidrule(lr){2-3} \cmidrule(lr){4-5} \cmidrule(lr){6-7} \cmidrule(lr){8-9} \cmidrule(l){10-11}
{Method} & {Obj} & {Time} 
         & {Obj} & {Time} 
         & {Obj} & {Time} 
         & {Obj} & {Time} 
         & {Obj} & {Time} \\
\midrule
Best Known   & 3596   &       & 3488   &       & 3744   &       & 1167256  &       & 5426670   &       \\
FMIP (10min / 20min)  & 4160   & 622   & 3874   & 630   & 4272   & 636   & 1312892  & 613   & 5571037   & 1213  \\
PIP (0.4)    & 3772   & 157   & 3776   & 344   & 3930   & 461   & 1309268  & 304   & 5490707   & 1156  \\
PIP (0.6)    & 3700   & 401   & 3710   & 590   & 3872   & 708   & 1237126  & 677   & 5489822   & 1343  \\
\bottomrule
\end{tabular}
\end{table}

\begin{table}[!h]
\centering
\caption{QAP Instances Group 3, $n < 30$}
\label{tab:qap_group3}
\small
\begin{tabular}{@{}l
  S[table-format=6.0]@{\hspace{1em}} S[table-format=3.0]
  S[table-format=6.0]@{\hspace{1em}} S[table-format=3.0]
  S[table-format=6.0]@{\hspace{1em}} S[table-format=3.0]
  S[table-format=3.0]@{\hspace{0.9em}} S[table-format=3.0]
  S[table-format=3.0]@{\hspace{0.9em}} S[table-format=3.0]@{}}
\toprule
& \multicolumn{2}{c}{bur26b}
& \multicolumn{2}{c}{bur26c}
& \multicolumn{2}{c}{bur26d} 
& \multicolumn{2}{c}{nug27} 
& \multicolumn{2}{c}{nug28} \\
\cmidrule(lr){2-3} \cmidrule(lr){4-5} \cmidrule(lr){6-7} \cmidrule(lr){8-9} \cmidrule(l){10-11}
{Method} & {Obj} & {Time} 
         & {Obj} & {Time} 
         & {Obj} & {Time} 
         & {Obj} & {Time} 
         & {Obj} & {Time} \\
\midrule
Best Known   & 3817852  &       & 5426795  &       & 3821225  &       & 5234   &       & 5166   &       \\
FMIP (10min / 20min) & 4051998  & 1213  & 6187881  & 1213  & 4013209  & 1213  & 6024   & 648   & 7054   & 618   \\
PIP (0.4)    & 3879427  & 985   & 5526051  & 730   & 3895655  & 1034  & 5622   & 511   & 5478   & 399   \\
PIP (0.6)    & 3857624  & 1234  & 5492074  & 1041  & 3883731  & 1219  & 5588   & 699   & 5478   & 525   \\
\bottomrule
\end{tabular}
\end{table}
\begin{table}[!h]
\centering
\caption{QAP Instances Group 4, $n \geq 30$}
\label{tab:qap_large}\small
\begin{tabular}{
  @{}l 
  S[table-format=6.0]@{\hspace{0.9em}} S[table-format=4.0] 
  S[table-format=6.0]@{\hspace{0.9em}} S[table-format=4.0] 
  S[table-format=6.0]@{\hspace{0.9em}} S[table-format=4.0] 
  S[table-format=6.0]@{\hspace{0.9em}} S[table-format=4.0] 
  S[table-format=6.0]@{\hspace{0.9em}} S[table-format=4.0]@{} 
}
\toprule
& \multicolumn{2}{c}{tho30}
& \multicolumn{2}{c}{tai30a} 
& \multicolumn{2}{c}{tai35a} 
& \multicolumn{2}{c}{tho40}
& \multicolumn{2}{c}{tai40a} \\
\cmidrule(lr){2-3} \cmidrule(lr){4-5} \cmidrule(lr){6-7} \cmidrule(lr){8-9} \cmidrule(l){10-11}
{Method} & {Obj} & {Time} 
         & {Obj} & {Time} 
         & {Obj} & {Time} 
         & {Obj} & {Time} 
         & {Obj} & {Time} \\
\midrule
Best Known     & 149936  &       & 1818146 &       & 2422002 &       & 240516  &       & 3139370 &       \\
FMIP (4hr / 8hr)     & 178866  & 14422 & 2073512 & 14477 & 2894058 & 14539 & 326506  & 14466 & 3836996 & 29040 \\
PIP (0.4)      & 164528  & 1267  & 1913228 & 4057  & 2613456 & 7619  & 269632  & 5351  & 3345830 & 25180 \\
PIP (0.6)      & 162876  & 4952  & 1909448 & 7683  & 2602810 & 11263 & 260936  & 8403  & 3326546 & 27629 \\
\bottomrule
\end{tabular}
\end{table}

\begin{table}[!h]
\centering
\caption{InvQP, $m = 200$, $n = 150$}
\label{tab:inverse_qp_200}
\small
\begin{tabular}{
  @{}l 
  S[table-format=-1.2] 
  S[table-format=4.0] 
  S[table-format=-1.2] 
  S[table-format=4.0] 
  S[table-format=-1.2] 
  S[table-format=4.0] 
  S[table-format=-1.2] 
  S[table-format=4.0] 
  S[table-format=-1.2] 
  S[table-format=4.0] 
  @{} 
}
\toprule
& \multicolumn{2}{c}{1} & \multicolumn{2}{c}{2} & \multicolumn{2}{c}{3} & \multicolumn{2}{c}{4} & \multicolumn{2}{c}{5} \\
\cmidrule(lr){2-3} \cmidrule(lr){4-5} \cmidrule(lr){6-7} \cmidrule(lr){8-9} \cmidrule(l){10-11}
{Method} & {Obj} & {Time} & {Obj} & {Time} & {Obj} & {Time} & {Obj} & {Time} & {Obj} & {Time} \\
\midrule
IPOPT & 353.55 & 4054 & 177.77 & 1431 & 183.65 & 1613 & 327.91 & 4283 & 295.90 & 4121 \\
FMIP (100min) & 159.68  & 6005  & 169.53  & 6005  & 163.51  & 6005 & 175.41 & 6005 & 162.91  & 6005  \\
FMIP-W (40min) & 159.90 & 6459 & 169.37 & 3841 & 163.51 & 4019 & 176.06 & 6692 & 163.46 & 6527 \\
PIP (0.4) & 160.07 & 4120 & 169.37 & 1451 & 166.25 & 1625 & 187.94 & 4322 & 164.29 & 4149 \\
PIP (0.6) & 159.15 & 4935 & 169.37 & 2058 & 163.51 & 2938 & 183.74 & 4800 & 161.02 & 4922 \\
PIP (0.8) & 159.15 & 6136 & 169.37 & 3259 & 163.51 & 4139 & 174.59 & 6602 & 161.02 & 6123 \\
\bottomrule
\end{tabular}

\end{table}

Last but not least, PIP also provides the best solution quality and efficiency in InvQP instances. Table \ref{tab:inverse_qp_200} shows that on all small InvQP instances, PIP ($0.6$) and PIP ($0.8$) obtain solutions with the highest quality in a shorter or comparable time than both FMIP and FMIP-W, providing significant improvement from the initial IPOPT solutions. While FMIP and FMIP-W also perform fairly well in small InvQP instances with 200 integer variables, similar to what we observe in small StQP instances, the performance gap becomes much more significant when the number of integer variables increases to $1000$.  
\gap

In the large InvQP instances, when the original LPCC expands to have $1000$ complementarity pairs, numerical issues start to occur due to the NLP solvers as mentioned in the implementation details, hence we opt for the alternative approach described previously to initialize PIP with FMIP (10min) solutions in order to maintain a fair comparison. Interestingly, as shown in 
Table \ref{tab:inverse_qp_1000}, despite being constrained to follow the same solution path as FMIP for the initial 10 minutes, PIP with \(p_{\max}\) values of 0.4, 0.6, or 0.8 consistently produces much better solutions with significantly lower objective values in shorter or comparable times across all instances. This further demonstrates the scalability and solution quality improvement capabilities of PIP.

\begin{table}[!h]
\centering
\caption{InvQP, $m=1000, n=750$}
\label{tab:inverse_qp_1000}
\setlength{\tabcolsep}{4.6pt} 
\small
\begin{tabular}{
  @{}l 
  S[table-format=-3.2] 
  S[table-format=3.0] 
  S[table-format=-1.1] 
  S[table-format=3.0] 
  S[table-format=-3.2] 
  S[table-format=3.0] 
  S[table-format=-1.1] 
  S[table-format=3.0] 
  S[table-format=-2.2] 
  S[table-format=4.0] 
  @{} 
}
\toprule
& \multicolumn{2}{c}{1} & \multicolumn{2}{c}{2} & \multicolumn{2}{c}{3} & \multicolumn{2}{c}{4} & \multicolumn{2}{c}{5} \\
\cmidrule(lr){2-3} \cmidrule(lr){4-5} \cmidrule(lr){6-7} \cmidrule(l){8-9} \cmidrule(l){10-11}
{Method} & {Obj} & {Time } & {Obj} & {Time } & {Obj} & {Time } & {Obj} & {Time } & {Obj} & {Time} \\
\midrule
FMIP (10min) & 11545.96 & 716 & 9611.32 & 718 & 678105.24 & 718 & 9601.07 & 717 & 14957.20 & 717 \\
FMIP (2.5hr) & 1614.41 & 9111 & 5702.10 & 9112 & 2225.22 & 9112 & 1678.53 & 9111 & 1649.75 & 9110 \\
PIP (0.4) & 707.56 & 5693 & 682.89 & 6303 & 670.77 & 6303 & 698.08 & 5086 & 719.38 & 5694 \\
PIP (0.6) & 707.56 & 6909 & 681.93 & 8129 & 669.68 & 8129 & 698.08 & 6302 & 719.38 & 6911 \\
PIP (0.8) & 706.63 & 8735 & 681.93 & 9346 & 669.68 & 9346 & 698.08 & 7520 & 719.38 & 8128 \\
\bottomrule
\end{tabular}

\end{table}

\gap

\noindent $\bullet $ {\bf InvAVI results.}
PIP produces solutions with better objective values in shorter
times on all instances with 150 or more complementarity pairs 
shown in Tables \ref{tab:InvAVI_150} through  
\ref{tab:InvAVI_350}. On the small instances with 100 
complementarity pairs in Table~\ref{tab:InvAVI_100}, PIP achieves 
global optimal solutions fast in 2 out of 5 instances, while 
providing solid improvement from initial solutions on all 
instances. 

\gap

\noindent The results on the InvAVI instances further expose the
inability of scalability for FMIP and FMIP-W approaches, 
as FMIP (1hr) is not able to produce any feasible solution on any 
instance when the number of complementarity pairs exceeds 150. 
FMIP-W also suffers from scalability with deficient improvement 
over initial solutions, especially on larger instances in Table 
\ref{tab:InvAVI_300} and  \ref{tab:InvAVI_350}. 
In contrast, PIP shows a much better scalability with 
consistently better improvements in solution quality on larger 
instances.

\gap

\noindent Interestingly, the performance of PIP with lower to 
medium $p_{\max}$ values (0.4 and 0.6) are already better than 
FMIP and FMIP-W on most instances, which might be due to the high 
complexity of the problem. 
Although PIP ($0.8$) remains the best 
approach in solution quality on InvAVI instances, one may 
find the PIP ($0.4$) solutions satisfactory while saving 
more computation time. In Table \ref{tab:InvAVI_250} to \ref{tab:InvAVI_350}, we omit the PIP ($0.6$) and ($0.8$) results since in most instances they do not provide significant advantage over PIP~($0.4$).

\begin{table}[h]
\centering
\caption{InvAVI, \(m=100,\;n=25\)}
\label{tab:InvAVI_100}
\small
\begin{tabular}{
  @{}l
  S[table-format=-2.2]
  S[table-format=4.0]
  S[table-format=-2.2]
  S[table-format=4.0]
  S[table-format=-2.2]
  S[table-format=4.0]
  S[table-format=-2.2]
  S[table-format=4.0]
  S[table-format=-2.2]
  S[table-format=4.0]
  @{}
}
\toprule
& \multicolumn{2}{c}{1} & \multicolumn{2}{c}{2} & \multicolumn{2}{c}{3} & \multicolumn{2}{c}{4} & \multicolumn{2}{c}{5} \\
\cmidrule(lr){2-3} \cmidrule(lr){4-5} \cmidrule(lr){6-7} \cmidrule(lr){8-9} \cmidrule(l){10-11}
{Method} & {Obj} & {Time} & {Obj} & {Time} & {Obj} & {Time} & {Obj} & {Time} & {Obj} & {Time} \\
\midrule
IPOPT         & -6.11  & 1   & -11.45 & 1   & -9.67  & 1   & -9.37  & 1   & -8.28  & 1 \\
FMIP (Opt)    & -7.47  & 147 & -15.25 & 253 & -12.75 & 24  & -12.42 & 39  & -10.74 & 20 \\
FMIP-W (Opt)  & -7.47  & 42  & -15.25 & 89  & -12.75 & 28  & -12.42 & 98  & -10.74 & 23 \\
PIP (0.4)     & -7.23  & 4   & -13.38 & 7   & -12.10 & 5   & -9.37  & 4   & -9.88  & 5 \\
PIP (0.6)     & -7.23  & 6   & -13.38 & 16  & -12.10 & 7   & -9.37  & 7   & -10.25 & 10 \\
PIP (0.8)     & -7.23  & 14  & -14.58 & 70  & -12.75 & 16  & -12.42 & 34  & -10.35 & 27 \\
\bottomrule
\end{tabular}
\end{table}
\begin{table}[h]
\centering
\caption{InvAVI, \(m=150,\;n=38\)}
\label{tab:InvAVI_150}\small
\begin{tabular}{
  @{}l
  S[table-format=-2.2]
  S[table-format=4.0]
  S[table-format=-2.2]
  S[table-format=4.0]
  S[table-format=-2.2]
  S[table-format=4.0]
  S[table-format=-2.2]
  S[table-format=4.0]
  S[table-format=-2.2]
  S[table-format=4.0]
  @{}
}
\toprule
& \multicolumn{2}{c}{1} & \multicolumn{2}{c}{2} & \multicolumn{2}{c}{3} & \multicolumn{2}{c}{4} & \multicolumn{2}{c}{5} \\
\cmidrule(lr){2-3} \cmidrule(lr){4-5} \cmidrule(lr){6-7} \cmidrule(lr){8-9} \cmidrule(lr){10-11}
{Method} & {Obj} & {Time} & {Obj} & {Time} & {Obj} & {Time} & {Obj} & {Time} & {Obj} & {Time} \\
\midrule
IPOPT         & -14.09 & 2   & -12.63 & 3   & -19.65 & 3   & -15.33 & 3   & -9.78  & 3 \\
FMIP (1hr)    & NA     & 3606& NA     & 3607& NA     & 3606& NA     & 3606& NA     & 3606 \\
FMIP-W (1hr)  & -15.43 & 3610& -14.64 & 3610& -20.68 & 3610& -16.82 & 3609& -10.86 & 3609 \\
PIP (0.4)     & -17.75 & 186 & -14.64 & 142 & -20.84 & 20  & -18.40 & 31  & -14.06 & 65 \\
PIP (0.6)     & -17.75 & 1386& -15.78 & 1089& -24.25 & 1292& -18.40 & 375 & -14.67 & 705 \\
PIP (0.8)     & -17.92 & 3788& -15.78 & 2290& -24.25 & 2493& -18.40 & 1576& -14.67 & 1906 \\
\bottomrule
\end{tabular}

\end{table}
\begin{table}[h]
\centering
\caption{InvAVI, \(m=250,\;n=63\)}
\label{tab:InvAVI_250}\small
\begin{tabular}{
  @{}l
  S[table-format=-2.2]
  S[table-format=4.0]
  S[table-format=-2.2]
  S[table-format=4.0]
  S[table-format=-2.2]
  S[table-format=4.0]
  S[table-format=-2.2]
  S[table-format=4.0]
  S[table-format=-2.2]
  S[table-format=4.0]
  @{}
}
\toprule
& \multicolumn{2}{c}{1} & \multicolumn{2}{c}{2} & \multicolumn{2}{c}{3} & \multicolumn{2}{c}{4} & \multicolumn{2}{c}{5} \\
\cmidrule(lr){2-3} \cmidrule(lr){4-5} \cmidrule(lr){6-7} \cmidrule(lr){8-9} \cmidrule(lr){10-11}
{Method} & {Obj} & {Time} & {Obj} & {Time} & {Obj} & {Time} & {Obj} & {Time} & {Obj} & {Time} \\
\midrule
IPOPT         & -20.73 & 7   & -13.23 & 7   & -16.21 & 7   & -24.77 & 7   & -18.85 & 7 \\
FMIP (1hr)    & NA     & 3610& NA     & 3610& NA     & 3610& NA     & 3610& NA     & 3610 \\
FMIP-W (1hr)  & -21.67 & 3617& -13.23 & 3617& -16.21 & 3617& -24.77 & 3617& -19.24 & 3617 \\
PIP (0.4)     & -22.60 & 1658& -15.61 & 1344& -18.37 & 1987& -26.60 & 3862& -19.24 & 1406 \\
\bottomrule
\end{tabular}
\end{table}
\begin{table}[h]
\centering
\caption{InvAVI, \(m=300,\;n=75\)}
\label{tab:InvAVI_300}\small
\begin{tabular}{
  @{}l
  S[table-format=-2.2]
  S[table-format=4.0]
  S[table-format=-2.2]
  S[table-format=4.0]
  S[table-format=-2.2]
  S[table-format=4.0]
  S[table-format=-2.2]
  S[table-format=4.0]
  S[table-format=-2.2]
  S[table-format=4.0]
  @{}
}
\toprule
& \multicolumn{2}{c}{1} & \multicolumn{2}{c}{2} & \multicolumn{2}{c}{3} & \multicolumn{2}{c}{4} & \multicolumn{2}{c}{5} \\
\cmidrule(lr){2-3} \cmidrule(lr){4-5} \cmidrule(lr){6-7} \cmidrule(lr){8-9} \cmidrule(lr){10-11}
{Method} & {Obj} & {Time} & {Obj} & {Time} & {Obj} & {Time} & {Obj} & {Time} & {Obj} & {Time} \\
\midrule
IPOPT         & -23.63 & 10  & -27.85 & 11  & -29.08 & 10  & -23.54 & 10  & -19.25 & 10 \\
FMIP (1hr)    & NA     & 3614& NA     & 3614& NA     & 3614& NA     & 3614& NA     & 3614 \\
FMIP-W (1hr)  & -23.63 & 3624& -28.05 & 3625& -29.08 & 3624& -23.56 & 3624& -19.43 & 3624 \\
PIP (0.4)     & -26.36 & 2303& -31.04 & 2685& -32.56 & 2852& -25.15 & 1326& -20.30 & 2243 \\
\bottomrule
\end{tabular}
\end{table}
\begin{table}[h]
\centering
\caption{InvAVI, \(m=350,\;n=88\)}
\label{tab:InvAVI_350}\small
\begin{tabular}{
  @{}l
  S[table-format=-2.2]
  S[table-format=4.0]
  S[table-format=-2.2]
  S[table-format=4.0]
  S[table-format=-2.2]
  S[table-format=4.0]
  S[table-format=-2.2]
  S[table-format=4.0]
  S[table-format=-2.2]
  S[table-format=4.0]
  @{}
}
\toprule
& \multicolumn{2}{c}{1} & \multicolumn{2}{c}{2} & \multicolumn{2}{c}{3} & \multicolumn{2}{c}{4} & \multicolumn{2}{c}{5} \\
\cmidrule(lr){2-3} \cmidrule(lr){4-5} \cmidrule(lr){6-7} \cmidrule(lr){8-9} \cmidrule(lr){10-11}
{Method} & {Obj} & {Time} & {Obj} & {Time} & {Obj} & {Time} & {Obj} & {Time} & {Obj} & {Time} \\
\midrule
IPOPT         & -17.43 & 15  & -29.09 & 16  & -35.83 & 16  & -29.11 & 16  & -28.29 & 15 \\
FMIP (1hr)    & NA     & 3618& NA     & 3618& NA     & 3618& NA     & 3618& NA     & 3618 \\
FMIP-W (1hr)  & -17.44 & 3633& -29.11 & 3634& -36.08 & 3634& -29.11 & 3634& -28.29 & 3634 \\
PIP (0.4)     & -19.47 & 2435& -30.01 & 2436& -36.18 & 2436& -30.80 & 3036& -29.66 & 2284 \\
\bottomrule
\end{tabular}
\end{table}

\subsection{Summary of numerical results}
\label{subsec:summary}

\noindent The numerical results reported in the
last subsection demonstrate the overall superior 
performance of the proposed PIP method over FMIP, 
FMIP-W, and global solvers on the StQP tested instances;
most worthy of note is the improvement of the 
solution quality for indefinite QPs and LPCCs from
NLP solvers.

\gap

\noindent For the StQP instances, PIP consistently produces high-quality solutions within comparable computational times. In smaller instances, PIP (0.9) matches the global optimal solutions. And as 
the problem size increases, PIP (0.8) provides superior or competitive solutions more rapidly than the other methods.

\gap

\noindent For the QAP instances, PIP (0.6) is the most effective method in terms of solution quality and computation time. On smaller instances, PIP achieves the best known or nearby solutions quickly. For larger instances, PIP produces solutions close to the best known results much faster than the full MILP approach. PIP (0.4) provides good balance between solution quality and computation speed.

\gap

\noindent For the InvQP instances, PIP (0.6) and PIP (0.8) excel in solution quality and computational speed on smaller instances. On larger instances, PIP demonstrates notable scalability and provides significant improvement in solution quality effectively.

\gap
\noindent For the InvAVI instances, PIP provides 
notable solution quality improvement with PIP (0.4) 
showcasing strong scalability and efficiency over the
FMIP formulations.

\section{Conclusion}

\noindent   In this paper, we have proposed a novel
progressive integer programming (PIP) method for solving 
and enhancing solutions in nonconvex quadratic programs 
and linear programs with complementarity constraints. 
We show that the solution produced by the new method has 
desirable analytical properties that cannot be claimed 
by the traditional nonlinear programming 
algorithms.  For the nonconvex QP, we 
prove the local optimality of the computed PIP solutions 
(assuming exact computations)
for the associated QP-induced LPCC and 
reveal the connections between suboptimal KKT triples and 
solutions to the original quadratic program.
For a general LPCC, the same local 
optimality claim is valid.  Through large amounts of 
numerical experiments, 
we demonstrate that PIP is a powerful technique for 
escaping from inferior solutions computed by an NLP 
algorithm.  In particular, when compared with full 
mixed-integer approaches, the proposed method exhibits 
significant advantages in scalability and flexibility 
over the instances tested.

Originated from \cite{FangLiuPang2024}, the PIP 
idea has the potential for solving wide classes of 
continuous
optimization problems fused with discrete indicators 
that can capture logical conditions.
Complementarity is an example of such problems; 
the class of
{\sl Heaviside composite functions} that has many 
applications of its own is another.  The latter are 
compositions of functions (possibly nonsmooth) with the
univariate Heaviside functions which themselves
are the indicators of intervals on the real line.  A 
comprehensive study 
of the optimization of such functions is presently ongoing
with many results already available; PIP provides a key
approach for solving the resulting optimization problems.

\end{document}